\newcommand{\tensor}[1][]{\@tensor[#1]}
\def\@tensor[#1] (#2,#3) #4; {{ 

\pgfkeys{/tensor/.cd,#1}

\def\depthScale{0.5} 

\pgfmathsetmacro{\numSlicesMinusOne}{\nSlices-1}
\pgfmathsetmacro{\numSlicesPlusOne}{\nSlices+1}


\pgfmathsetmacro{\sliceLength}{\myScale*\dimOne}

\ifthenelse{\equal{\sliceType}{lateral}}
	{
	
	\pgfmathsetmacro{\sliceWidth}{\myScale*\sWidth*0.9*\dimTwo/\nSlices}
	\pgfmathsetmacro{\sliceGap}{\myScale*\dimTwo/(\nSlices-1) - \nSlices*\sliceWidth/(\nSlices-1)}
	\pgfmathsetmacro{\sliceDepth}{\myScale*\dimThree}
	
	} 
	{
	\ifthenelse{\equal{\sliceType}{frontal}}
		{
		
		\pgfmathsetmacro{\sliceDepth}{\myScale*\sWidth*0.9*\dimThree/\nSlices}
		\pgfmathsetmacro{\sliceGap}{\myScale*\dimThree/(\nSlices-1) - \nSlices*\sliceDepth/(\nSlices-1)}
		\pgfmathsetmacro{\sliceWidth}{\myScale*\dimTwo}
	
		}
		{
		\pgfmathsetmacro{\sliceWidth}{\myScale*\dimTwo}
		\pgfmathsetmacro{\sliceDepth}{\myScale*\dimThree}
		}

	}

\def\xFront{#2 + \xShift}	
\def\yFront{#3 + \yShift}
\def\xBack{#2 + \xShift + \depthScale*\sliceDepth + \xSpec*\sliceDepth}
\def\yBack{#3 + \yShift + \depthScale*\sliceDepth + \ySpec*\sliceDepth}



\def\aFront{(\xFront, \yFront)}
\def\bFront{(\xFront, \yFront + \sliceLength)}
\def\cFront{(\xFront + \sliceWidth, \yFront + \sliceLength)}
\def\dFront{(\xFront + \sliceWidth, \yFront)}

\def\aBack{(\xBack, \yBack)}
\def\bBack{(\xBack, \yBack + \sliceLength)}
\def\cBack{(\xBack + \sliceWidth, \yBack + \sliceLength)}
\def\dBack{(\xBack+ \sliceWidth, \yBack)}

\ifthenelse{\NOT\equal{\myFill}{nofill}}
	{
	\def\tempTensor{
		\fill[\myFill!25] \bFront -- \bBack -- \cBack -- \cFront -- cycle; 
		\fill[\myFill!75] \dFront -- \dBack -- \cBack -- \cFront -- cycle; 
		\fill[\myFill!50] \aFront rectangle \cFront;  
	
		\draw \aFront rectangle \cFront; 
		\draw \bFront -- \bBack; 
		\draw \cFront -- \cBack;
		\draw \dFront -- \dBack;
	
		\draw \bBack -- \cBack;
		\draw \cBack -- \dBack;
		}
	}
	{ 

	\def\tempTensor{
		\draw \aFront rectangle \cFront; 
		
		\ifthenelse{\NOT\equal{\myBack}{0}}
		{
			\draw[dashed] \bBack -- \aBack -- \dBack;
		}{}
		
		\draw \dBack -- \cBack -- \bBack;

		\ifthenelse{\NOT\equal{\myBack}{0}}
		{
			\draw[dashed] \aFront -- \aBack;
		}{}
		
		\draw \bFront -- \bBack;
		\draw \cFront -- \cBack;
		\draw \dFront -- \dBack;
		}
	}


\ifthenelse{\equal{\sliceType}{lateral}}
	{
	\foreach\sliceCount in {0,...,\numSlicesMinusOne}
		{	
		\begin{scope}[shift ={(\sliceCount*\sliceWidth + \sliceCount*\sliceGap, 0)}]
			\tempTensor;
		\end{scope}
		}
	
	}
	{
	
	\ifthenelse{\equal{\sliceType}{frontal}}
	{
	
	\pgfmathsetmacro{\xStep}{\sliceDepth/2 + \sliceGap/2 + \myScale*\dimThree*\xSpec/(\nSlices-(1-\sWidth))}
	\pgfmathsetmacro{\yStep}{\sliceDepth/2 + \sliceGap/2 +  \myScale*\dimThree*\ySpec/(\nSlices-(1-\sWidth))}
	
	\foreach\sliceCount in {-\numSlicesMinusOne,...,0}
		{	
		
		\begin{scope}[shift = {(-\sliceCount*\xStep, -\sliceCount*\yStep)}]
			\tempTensor;
		\end{scope}
	
		}
	
	}
	{
	\tempTensor;
	}
	
	}


\node at (#2 + \dimTwo/2, #3 + \dimOne/2) {#4};

}} 
\renewcommand{\pmod}[1]{\ (\mathrm{mod}\ #1)}
\newcommand{\fold}[1]{\text{fold}\left(#1\right)}
\newcommand{\unfold}[1]{\text{unfold}\left(#1\right)}
\newcommand{\circulant}{\text{circ}}
\newcommand{\bcirc}[1]{\text{bcirc}\left(#1\right)}
\renewcommand{\circulant}[1]{\text{circ}\left(#1\right)}
\newcommand{\diag}[1]{\text{diag}\left(#1\right)}
\newcommand{\bdiag}[1]{\text{bdiag}\left(#1\right)}
\newcommand{\norm}[1]{\left\lVert#1\right\rVert} 
\newcommand{\E}[1]{\mathbb{E}\left[#1\right]} 
\newcommand{\C}{\mathbb{C}} 
\newcommand{\cA}{{\cal A}}
\newcommand{\cB}{{\cal B}}
\newcommand{\cE}{{\cal E}}
\newcommand{\cI}{{\cal I}}
\newcommand{\cM}{{\cal M}}
\newcommand{\cP}{{\cal P}}
\newcommand{\cQ}{{\cal Q}}
\newcommand{\cX}{{\cal X}}
\newcommand{\cW}{{\cal W}}
\newcommand{\mA}{{\bf A}}
\newcommand{\mB}{{\bf B}}
\newcommand{\mD}{{\bf D}}
\newcommand{\mF}{{\bf F}}
\newcommand{\mI}{{\bf I}}
\newcommand{\mM}{{\bf M}}
\newcommand{\mP}{{\bf P}}
\newcommand{\mX}{{\bf X}}
\newcommand{\mY}{{\bf Y}}
\newcommand{\thup}{^{\text{th}}}
\newcommand{\stup}{^{\text{st}}}
\theoremstyle{plain}
\newtheorem{theorem}{Theorem}  
\newtheorem{lemma}[theorem]{Lemma}
\newtheorem{fact}[theorem]{Fact} 
\Crefname{fact}{Fact}{Facts}
\theoremstyle{definition}
\newtheorem{assumption}{Assumption} 
\newtheorem{definition}{Definition}
\theoremstyle{remark}
\newtheorem{remark}{Remark} 
\Crefname{remark}{Remark}{Remarks}
\Crefname{assumption}{Assumption}{Assumptions}
\newcommand\reallywidehat[1]{%
\savestack{\tmpbox}{\stretchto{%
  \scaleto{%
    \scalerel*[\widthof{\ensuremath{#1}}]{\kern-.6pt\bigwedge\kern-.6pt}%
    {\rule[-\textheight/2]{1ex}{\textheight}}
  }{\textheight}%
}{0.5ex}}%
\stackon[1pt]{#1}{\tmpbox}%
}
\title{Randomized Kaczmarz for Tensor Linear Systems}
\author{Anna Ma and Denali Molitor}
\begin{document}

\maketitle

\begin{abstract}
Solving linear systems of equations is a fundamental problem in mathematics. When the linear system is so large that it cannot be loaded into memory at once, iterative methods such as the randomized Kaczmarz method excel. Here, we extend the randomized Kaczmarz method to solve multi-linear (tensor) systems under the tensor-tensor t-product. We provide convergence guarantees for the proposed tensor randomized Kaczmarz that are analogous to those of the randomized Kaczmarz method for matrix linear systems. We demonstrate experimentally that the tensor randomized Kaczmarz method converges faster than traditional randomized Kaczmarz applied to a naively matricized version of the linear system. In addition, we draw connections between the proposed algorithm and a previously known extension of the randomized Kaczmarz algorithm for matrix linear systems.
\end{abstract}

\section{Introduction}
Methods for processing and analyzing large datasets have seen rapid development and use in signal processing and machine learning. For example, in the machine learning community, recommender systems and collaborative filtering have become ubiquitous tools for understanding user behavior and preferences. Data is commonly interpreted in this setting as a user-item matrix. As another example, consider the process of recovering a compressed video. Videos are understood to be a collection of image frames and images are often vectorized so that the signal associated with a video is a pixel location by frame matrix.

One reason data are often organized in this two dimensional (user-item, pixel-frame, etc.) fashion is because a vast majority of the existing methods operate on data that are stored as matrices and vectors. Recommender systems employ matrix factorization~\cite{koren2009matrix}. Sparse optimization and multiple measurement vector methods are common approaches for video recovery~\cite{majumdar2012face, liu2010block}. In such approaches, optimization frameworks expect data in the form of one or two dimensional arrays (i.e., vectors and matrices). However, in reality, data can be higher multidimensional arrays and this restriction to the one or two dimensional representations often destroys structure (for example, spatial or temporal structure) inherent to the data. In video recovery, data occurs naturally as a third-order tensor with dimensions image width by image height by frame number. Commonly, images are vectorized to form columns of the pixel by frame data matrix, which destroys the spatial correlation within the frames. 

In the seminal paper of~\cite{kilmer2011factorization}, the authors define a closed multiplication operation between two tensors referred to as the \emph{t-product}. Initially motivated for tensor factorization, use of the t-product has become prominent in the tensor and signal processing community. Under the t-product, tensors enjoy a linear algebraic-like framework that has proved useful in applications such as dictionary learning \cite{soltani2016tensor, zhang2016denoising}, low-rank tensor completion \cite{8066348, 6737273, 7782758, 6909886}, facial recognition \cite{hao2013facial}, and neural networks \cite{newman2018stable, wang2020tensor}. 
The process of \emph{naively} transforming high-order tensors into two dimensional arrays via a flattening or unfolding process is often referred to as ``matricization". 
Since the t-product acts as a linear operator directly on higher-order tensors, it avoids matricization and preserves multidimensional structure.

Here, we consider the fundamental problem of solving large linear systems of equations for third-order tensors under the t-product. In the matrix linear system setting, randomized iterative methods are a popular choice for solving or finding approximate solutions to systems that are too large to load into memory at once \cite{drineas2016randnla, ma2015convergence, strohmer2009randomized}. 

One such randomized iterative method is the known as the randomized Kaczmarz method. The randomized Kaczmarz method (MRK) \footnote{While the randomized Kaczmarz literature typically abbreviates randomized Kaczmarz as RK, throughout this work, MRK is used to distinguish the matrix and tensor versions of randomized Kaczmarz.} is closely related to other popular randomized iterative methods such as stochastic gradient descent and coordinate descent and is commonly used in computed tomography (CT imaging) and other signal processing applications~\cite{needell2014stochastic, gower2015randomized}. 

In this work, we propose a Kaczmarz-type iterative methods for tensor linear systems under the t-product which we refer to as tensor randomized Kaczmarz (TRK). We analyze the convergence of TRK and derive theoretical guarantees for the proposed method in two variations. The first approach analyzes TRK from a similar lens as MRK, i.e., views iterates as projections onto solution spaces of a subsampled system. The second approach takes advantage of the fact that the t-product can be efficiently computed in Fourier space. In addition to proving theoretical guarantees for TRK, we also make connections between TRK and other variants of MRK. Our theoretical findings are supported by numerical experiments before we conclude our work with final remarks. 
We view this analysis as a case study with a template to extend other methods to the tensor setting under the t-product.

\subsection{Randomized Kaczmarz}
Randomized Kaczmarz is an iterative method for approximating solutions to linear systems of equations \cite{Kaczmarz1937}. The MRK method uses iterative projections onto the solution space with respect to a selected row to approximate the solution of a linear system. More specifically, for a linear system $\mA x=b$, a row index $i$ is chosen at each iteration of MRK and the current iterate (approximate solution) is projected onto the solution space
\begin{equation*}
    \mA_{i:} x = b_i.
\end{equation*}
The method is advantageous for very large linear systems that cannot be loaded into memory at once. 
There are many extensions to MRK including greedy \cite{agmon1954relaxation,motzkin1954relaxation,haddock2019motzkin,de2017sampling,nutini2016convergence,petra2016single,censor1981row} and block \cite{needell2014paved} variants to speed convergence and an extended version for inconsistent linear systems \cite{needell2010randomized,zouzias2013randomized}. 
The MRK method and its block variant fall under the more general sketch-and-project framework which additionally includes other popular methods such as coordinate descent \cite{gower2015randomized}. 
Strohmer and Vershynin demonstrated that MRK converges exponentially in expectation when indices $i$ are sampled with probabilities proportional to the squared row norms $||\mA_{i:}||^2$ \cite{strohmer2009randomized}. When the rows of $\mA$ are normalized, this is equivalent to sampling the indices uniformly at random. The standard MRK update for a linear system $\mA x = b$ is given by 
\begin{equation}\label{eqn:matrixRK}
    x^{t+1} = x^{t} 
     - \mA_{i_t:}^* \frac{ \langle \mA_{i_t:}, x^t\rangle - b_{i_t}}{\norm{\mA_{i_t:}}^2},
\end{equation}
where $i_t$ is the row index selected at iteration $t$ and $\mA_{i_t:}^*$ is the transpose of the $i_t\thup$ row of $\mA$.
At each iteration $t$, the current iterate $x^t$ is projected onto the solution space with respect to the row $\mA_{i_t:}$ of the measurement matrix $\mA$.

For linear systems of the form $\mA\mX = \mB$ with $\mX$ and $\mB$ representing matrices, we can apply the MRK update of \Cref{eqn:matrixRK} to each column of $\mB$ in order to recover each column of the signal $\mX$. We can equivalently rewrite the MRK update for this case as 
\begin{equation}\label{eqn:matrix_simul_RK}
    \mX^{t+1} = \mX^{t} -\mA_{i_t:}^* \frac{ \mA_{i_t:} \mX^t - \mB_{i_t:}}{\norm{\mA_{i_t:}}^2}.
\end{equation}

\subsection{Tensor linear systems}
Tensors arise in many applications and working with tensors directly, as opposed to naively flattening tensors into matrices can preserve significant structures and  have computational advantages. 
Unfortunately, when working with tensors, many basic and fundamental linear algebraic constructs and results do not generalize naturally. For example, it is not obvious how one should define multiplication between two tensors \cite{kilmer2011factorization,bader2006algorithm,kiers2000towards,de2000multilinear}.

We specifically consider tensor linear systems under the tensor t-product. The t-product, proposed by Kilmer and Martin \cite{kilmer2011factorization}, is a bilinear operation between tensors, that allows for the generalization of many matrix algebra definitions and properties to the tensor setting. In particular, the t-product generalizes the concept of orthogonality between tensors, which is key for analysis of TRK. We provide further details about the t-product in \Cref{subsec:tensor_LA}.

A tensor linear system under the t-product is formulated as follows. Let $\cX\in\C^{\ell \times p\times n}$ be an unknown third-order tensor representing a  three-dimensional data array. For example, this three-dimensional data could represent a video, color image, temporal data, or three-dimensional density values. 
A tensor linear system under the t-product is written as:
\begin{equation}\label{eqn:linsys}
    \cA \cX = \cB,
\end{equation} 
with $\cA \in \C^{m \times \ell \times n}$, $\cX \in \C^{\ell \times p \times n}$ and $\cB\in \C^{m\times p\times n}$. 

Tensor linear systems arise in many applications. For example, factorization methods and dictionary learning have been extended to the tensor setting \cite{Zubair2013,Zhang2017,tan2015tensor,Roemer2014,anandkumar2015learning} and specifically with use of the t-product \cite{kilmer2011factorization,2019KilmerNewton,soltani2016tensor}. In practice, factorization methods such as non-negative matrix factorization depend on solving (potentially very large) linear systems as subroutines. As another example, consider extreme learning machines (ELM). Extreme learning machines are feedforward neural networks in which random weights are assigned for the hidden nodes \cite{huang2006universal}. A linear mapping of the hidden-layer outputs is then learned using a labeled set of training data. A major advantage of ELM is that learning the linear mapping of the hidden-layer outputs to the output layer is relatively simple and is independent of the activation functions used. Newman et al. proposed a tensor neural network intended for tensor data \cite{newman2018stable}. Their proposed networks use tensor-tensor products and enable the use of more compact parameter spaces \cite{2019KilmerNewton}. Extending ELM to the tensor neural network setting leads to the need to solve (again potentially very large) tensor linear systems.

Randomized Kaczmarz is closely related to the popular optimization technique, stochastic gradient descent (SGD) \cite{needell2014stochastic}. Most related to this work is the 
tensor stochastic gradient descent that was recently implemented to train tensor neural networks under the t-product~\cite{newman2018stable}. The focus of the aforementioned work is a tensor neural network framework for multidimensional data and does not delve into an algorithmic analysis of SGD under the t-product. 

In this work, we introduce a Kaczmarz inspired iterative method that considers row slices of the tensor system at each iteration and provide theoretical analysis for the proposed method. To the best of our knowledge, no other works have consider solving large-scale linear t-product tensor systems with stochastic iterative methods.

\subsection{Contributions}
We propose TRK, a randomized Kaczmarz method for solving linear systems of third-order tensors under the t-product. We analyze the convergence of the proposed method and demonstrate its performance empirically. In the convergence analysis, we discuss projections and spectral constants for tensors under the t-product.  We compare the performance of the proposed TRK method with a naively matricized MRK applied to a flattened tensor system. We also demonstrate that TRK is equivalent to performing block MRK \cite{elfving1980block} in the Fourier domain. This work serves as an example for extending methods for tensors under the t-product and how the properties of the t-product in the Fourier domain can be used to analyze convergence in this setting.

\section{Background and notation}\label{sec:background_notation}
In this section, we present notation and several linear algebraic results for tensors under the t-product.

\subsection{Notation}\label{subsec:notation}
Throughout, calligraphic capital letters represent tensors, bold capital letters represent matrices, and lower case letters represent vectors and scalars. The index $i$ is reserved for indexing row slices of tensors (see~\Cref{fig:row_slice}), rows of matrices, and entries of vectors. The index $j$ is similarly reserved for indexing column slices of tensors and columns of matrices. The index $k$ is reserved for indexing frontal slices of tensors as illustrated in \Cref{fig:frontal_slice}.

For matrices \mM, we use the notation $\mM_{i:}$ and $\mM_{:j}$ to represent the $i\thup$ row and $j\thup$ column respectively. We use $\cM_{i::}$ to represent row slices and $\cM_{::k}$ to represent frontal slices of a third-order tensor $\cM$ as shown in \Cref{fig:tensor_slices}. Because frontal slices of tensors are heavily used throughout this work, to condense notation,  bold subscripted capital letters, $\mM_k$, represents the $k\thup$ frontal slice of $\cM$ equivalently given by $\cM_{::k}$, unless otherwise stated (for example, the $n\times n$ DFT matrix $\mF_n$ and  $n\times n$ identity matrix $\mI_n$). 

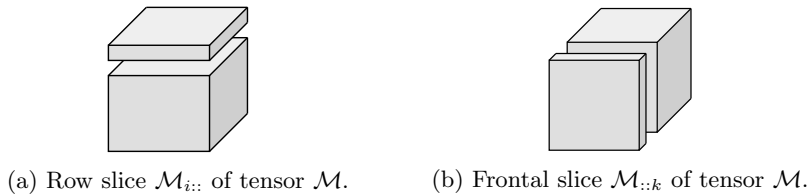
\begin{figure}[h]
    \begin{center}
\begin{tabular}{ccc}

\begin{subfigure}{.45\linewidth}
\centering
  \resizebox{0.4\textwidth}{!}{
    \begin{tikzpicture}
    	\tensor[dim1 = 0.75, dim2 = 1., dim3 = 0.75, fill = lightgray] (6  ,-.9) {};
        \tensor[dim1 = 0.15, dim2 = 1., dim3 = 0.75, fill = lightgray] (6 ,0) {};
    \end{tikzpicture}
  }
\caption{Row slice $\cM_{i::}$ of tensor $\cM$.}
\label{fig:row_slice}
\end{subfigure}
&

\begin{subfigure}{.45\linewidth}  
    \centering
  \resizebox{0.4\textwidth}{!}{
    \begin{tikzpicture}	
    	\tensor[dim1 = 1., dim2 = 1., dim3 = .75, fill = lightgray] (10  ,-.9) {};
        \tensor[dim1 = 1., dim2 = 1., dim3 = 0.15, fill = lightgray] (9.8 ,-1.1) {};
    \end{tikzpicture}
    }
    \caption{Frontal slice $\cM_{::k}$ of tensor $\cM$.}
    \label{fig:frontal_slice}
\end{subfigure}

\end{tabular}
\end{center}
\caption{Row slice $\cM_{i::}$ and frontal slice $\cM_{::k}$ of tensor $\cM$.}
\label{fig:tensor_slices}
\end{figure}

The squared Frobenius norm $||\cdot||_F^2$ for matrices and tensors denotes the sum of squares of all scalar elements. For a matrix $\mM$, $||\mM||_F^2 = \sum_{ij} \mM_{ij}^2$ and for a third-order tensor $\cM$, $||\cM||_F^2 = \sum_{ijk} \cM_{ijk}^2$. We use $\sigma_{\min}(\mM)$ to denote the smallest singular value and $\mM^\dagger$ to denote the pseudoinverse of the matrix $\mM$.

\Cref{eqn:unfold} shows how a third-order tensor $\cM$ is unfolded into a matrix: 
\begin{equation}\label{eqn:unfold}
    \unfold{\cM} = \begin{pmatrix}
         \cM_{::0}  \\
         \vdots \\
         \cM_{::n-1}
    \end{pmatrix}= \begin{pmatrix}
         \mM_{0}  \\
         \vdots \\
         \mM_{n-1}
    \end{pmatrix}.
\end{equation}
To revert the unfolding of a tensor $\cM$ we can fold the matrix in \Cref{eqn:unfold} such that $\fold{\unfold{\cM}} = \cM$. 
To condense notation, when using both indices and transposes, the transpose are applied to the tensor or matrix slice, that is $\mM_{i:}^* = \left(\mM_{i:}\right)^*$ and $\cM_{i::}^* = \left(\cM_{i::}\right)^*$. 

The tensor product of tensors $\cA$ and $\cB$ is written as $\cA\cB$. Similarly, for matrices $\mA, \mB$, their matrix product is written as $\mA\mB$. We do not consider the products between tensors and matrices. 
Throughout, we use $\cA$ and $\mA$ to represent the measurement tensor and matrix, $\cX$, $\mX$, and $x$ to represent signal tensor, matrix and vector and $\cB$, $\mB$, and $b$ to represent the observed measurements for the linear systems
\begin{equation*}
    \cA \cX = \cB, \quad \mA \mX = \mB, \mbox{ and } \mA x = b.
\end{equation*}
Lastly, the index $t$ is reserved only to indicate iteration number and the shorthand $i \in [m-1]$ denotes $i = \{ 0, 1, 2, ..., m-1\}$.

\subsection{Tensor linear algebra}\label{subsec:tensor_LA} 

We now provide background on the tensor-tensor t-product\cite{kilmer2011factorization}. Under the t-product one, can recover many standard linear algebraic properties such as transposes, orthogonality, inverses and projections.

The t-product is defined in terms of block-circulant matrices.
\begin{definition}\label{defn:bcirc}
For $\cA\in \C^{m \times \ell \times n}$, let $\bcirc{\cA}$ denote the block-circulant matrix
\begin{equation}\label{eqn:bcirc}
    \bcirc{\cA} = \begin{pmatrix}
        \mA_0 & \mA_{n-1} & \mA_{n-2} & \ldots & \mA_{1}\\
        \mA_{1} & \mA_{0} & \mA_{n-1} & \ldots & \mA_{2}\\
        \vdots & \vdots & \vdots & \ddots & \vdots \\
        \mA_{n-1} & \mA_{n-2} & \mA_{n-3} & \ldots & \mA_{0}\\
        \end{pmatrix} \in\C^{mn \times \ell n}.
\end{equation}
\end{definition}

The following definitions of tensor-tensor product, identity tensor, transpose, inverse, and orthogonality under the t-product are taken from Kilmer and Martin  \cite{kilmer2011factorization}. While the definitions and results here are specific to the t-product, this product has been generalized to a class of tensor products that use arbitrary invertible linear operators \cite{kernfeld2015tensor}. 
\begin{definition}\label{defn:tprod}
The tensor-tensor t-product is defined as 
\begin{equation*}
    \cA \cB = \fold {\bcirc{\cA} \unfold{\cB}} \in \C^{m \times p \times n},
\end{equation*}
where $\cA \in \C^{m \times \ell \times n}$ and $\cB \in \C^{\ell \times p \times n}$.
\end{definition}

\begin{definition}\label{defn:Identity}
The $m\times m\times n$ identity tensor, denoted $\cI$, is the tensor whose first frontal slice is the $m\times m$ identity matrix and whose remaining entries are all zeros.
\end{definition}
The identity tensor satisfies
\[\cM  \cI = \cI  \cM = \cM\] 
for all tensors $\cM$ with compatible sizes. 

\begin{definition}\label{defn:transpose}
The conjugate transpose of a tensor $\cM \in \C^{m\times \ell \times n}$ is denoted $\cM^*$ and is produced by taking the conjugate transpose of all frontal slices and reversing the order of the frontal slices $1,\ldots, n-1$.
\end{definition}
Note that this definition ensures $\left(\cM^*\right)^* = \cM$ and $(\cA \cB)^* = \cB^* \cA^*$. A tensor is symmetric if $\cM^*=\cM$.

\begin{definition}
    A tensor $\cM$ is invertible if there exists an inverse tensor $\cM^{-1}$ such that
    \begin{equation*}
        \cM \cM^{-1} = \cM^{-1} \cM = \cI.
    \end{equation*}
\end{definition}
Note that for an invertible tensor $\cM$, 
\begin{equation*}
    \cM^* \left(\cM^{-1}\right)^* = \cM^{-1} \cM = \cI
\end{equation*}
and 
\begin{equation*}
   \left(\cM^{-1}\right)^* \cM^*  =  \cM \cM^{-1} = \cI.
\end{equation*}
Thus, we have $\left(\cM^*\right)^{-1} = \left(\cM^{-1}\right)^* $.

\begin{definition}\label{defn:orthogonal}
A tensor $\cQ\in \C^{m\times p\times n}$ is orthogonal if 
\begin{equation*}
    \cQ^*  \cQ = \cI = \cQ  \cQ^*.
\end{equation*}
\end{definition}

The following properties of block circulant matrices will be useful throughout. Proofs can be found in \Cref{sec:bcirc_proofs}.
\begin{fact}
For tensors $\cA$ and $\cB$, the following equality holds:
    \begin{equation*}
        \bcirc{\cA\cB} = \bcirc{\cA}\bcirc{\cB}.
    \end{equation*}
    \label{fact:bcirc_distr}
\end{fact}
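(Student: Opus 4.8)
The plan is to prove the identity by computing both sides block-by-block, indexing the $n\times n$ grid of blocks by $0,\dots,n-1$ to match the indexing of frontal slices. By \Cref{defn:bcirc}, the $(i,j)$ block of $\bcirc{\cA}$ is $\mA_{(i-j)\bmod n}$ and the $(j,k)$ block of $\bcirc{\cB}$ is $\mB_{(j-k)\bmod n}$, where $\mA_a$ is $m\times\ell$ and $\mB_b$ is $\ell\times p$, so all the block products that will appear are well-defined and of size $m\times p$.

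First I would expand the matrix product. Block multiplication gives that the $(i,k)$ block of $\bcirc{\cA}\bcirc{\cB}$ equals
\[
  \sum_{j=0}^{n-1}\mA_{(i-j)\bmod n}\,\mB_{(j-k)\bmod n}
  \;=\;\sum_{s=0}^{n-1}\mA_{(i-k-s)\bmod n}\,\mB_s ,
\]
after the substitution $s=(j-k)\bmod n$, which is a bijection of $\{0,\dots,n-1\}$ and satisfies $i-j\equiv i-k-s\pmod n$. In particular this block depends on $i$ and $k$ only through $(i-k)\bmod n$, so $\bcirc{\cA}\bcirc{\cB}$ is itself block-circulant.

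Next I would identify this expression with a frontal slice of $\cA\cB$. By \Cref{defn:tprod}, $\cA\cB=\fold{\bcirc{\cA}\unfold{\cB}}$, and since $\unfold{\cB}$ stacks $\mB_0,\dots,\mB_{n-1}$, the $r\thup$ block of $\bcirc{\cA}\unfold{\cB}$ — which by definition of $\fold$ is the $r\thup$ frontal slice $(\cA\cB)_r$ — equals $\sum_{s=0}^{n-1}\mA_{(r-s)\bmod n}\,\mB_s$. Setting $r=(i-k)\bmod n$ shows the $(i,k)$ block of $\bcirc{\cA}\bcirc{\cB}$ is $(\cA\cB)_{(i-k)\bmod n}$, which by \Cref{defn:bcirc} applied to the tensor $\cA\cB$ is exactly the $(i,k)$ block of $\bcirc{\cA\cB}$. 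Since all blocks agree, the two matrices are equal.

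There is no substantive obstacle; the only care needed is the bookkeeping with the modular shifts and confirming that the block sizes $m,\ell,p$ line up so every product $\mA_a\mB_b$ is defined, which follows from $\cA\in\C^{m\times\ell\times n}$ and $\cB\in\C^{\ell\times p\times n}$. I note in passing that the same fact follows from the Fourier characterization — namely that $(\mF_n\otimes\mI)\,\bcirc{\cdot}\,(\mF_n^{-1}\otimes\mI)$ block-diagonalizes every block-circulant matrix and the t-product acts slicewise in that domain — but since the elementary computation above needs no extra machinery, I would present that version.
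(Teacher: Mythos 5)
Your proof is correct, and it takes a somewhat different route from the paper's. You argue block-by-block: the $(i,k)$ block of $\bcirc{\cA}\bcirc{\cB}$ is $\sum_{s}\mA_{(i-k-s)\bmod n}\mB_s$, which depends only on $(i-k)\bmod n$, and you match it against the frontal slices of $\cA\cB$ read off from \Cref{defn:tprod}. The paper instead first establishes the decomposition $\bcirc{\cM}=\sum_{i}\circulant{e_i}\otimes\cM_{::i}$ (\Cref{eqn:circ_form}) and then multiplies the two sums using the Kronecker mixed-product property together with $\circulant{e_i}\circulant{e_k}=\circulant{e_{i+k\pmod n}}$, after computing the same convolution formula $\left(\cA\cB\right)_{::r}=\sum_{k}\mA_{r-k\pmod n}\mB_k$ that you use. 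The core computation (frontal slices of $\cA\cB$ as a cyclic convolution of the slices of $\cA$ and $\cB$) is shared, so the difference is in packaging: your version is more elementary, avoids Kronecker-product machinery, and yields as a byproduct the observation that a product of block-circulant matrices is block-circulant; the paper's version buys a reusable structural identity, since the same decomposition \Cref{eqn:circ_form} is then used again in the proof of \Cref{fact:bcirc_transpose}. Your closing remark that the fact also follows from the Fourier block-diagonalization is sound, though as stated in the paper that characterization (\Cref{eqn:fact_2_kilmer}) is invoked downstream of \Cref{fact:bcirc_distr}, so the elementary argument you present is the right one to avoid circularity.
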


\begin{fact}\label{fact:bcirc_transpose}
The block circulant operator $\bcirc{\cdot}$ commutes with the conjugate transpose,
    \begin{equation*}
        \bcirc{\cM^*} = \bcirc{\cM}^*.
    \end{equation*}
\end{fact}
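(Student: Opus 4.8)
The plan is to verify the claimed identity block-by-block, indexing the $n\times n$ grid of blocks by pairs $(i,j)$ with $i,j\in\{0,1,\dots,n-1\}$, and comparing the $(i,j)$-th block of $\bcirc{\cM^*}$ with the $(i,j)$-th block of $\bcirc{\cM}^*$.

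First I would record the circulant indexing convention implicit in \Cref{defn:bcirc}: the $(i,j)$-th block of $\bcirc{\cM}$ is the frontal slice $\mM_{(i-j)\bmod n}$. One checks this against the displayed block matrix in \eqref{eqn:bcirc}: the diagonal blocks are $\mM_0$, the blocks directly above the diagonal are $\mM_{n-1}$, and so on. Consequently, the $(i,j)$-th block of $\bcirc{\cM}^*$ is the conjugate transpose of the $(j,i)$-th block of $\bcirc{\cM}$, namely $\bigl(\mM_{(j-i)\bmod n}\bigr)^*$.

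Next I would unpack \Cref{defn:transpose}. Writing $\mN_k$ for the $k$-th frontal slice of $\cM^*$, the definition gives $\mN_0 = \mM_0^*$ and $\mN_k = \mM_{n-k}^*$ for $k=1,\dots,n-1$; both cases are captured uniformly by $\mN_k = \bigl(\mM_{(n-k)\bmod n}\bigr)^*$. Applying the circulant indexing convention again, the $(i,j)$-th block of $\bcirc{\cM^*}$ is $\mN_{(i-j)\bmod n} = \bigl(\mM_{(n-((i-j)\bmod n))\bmod n}\bigr)^* = \bigl(\mM_{(j-i)\bmod n}\bigr)^*$, where the final equality uses the elementary congruence $-(i-j)\equiv j-i \pmod{n}$. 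This matches exactly the $(i,j)$-th block of $\bcirc{\cM}^*$ computed in the previous paragraph, so the two matrices agree in every block and are therefore equal.

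The only real obstacle is bookkeeping: one must check that the ``reverse the order of frontal slices $1,\dots,n-1$'' clause of \Cref{defn:transpose} composes correctly with the circulant shift pattern, and that the modular identity $(n-((i-j)\bmod n))\bmod n = (j-i)\bmod n$ is handled cleanly, including the boundary case $i=j$ (where both sides are $0$). There is no analytic content here; once the indexing is pinned down, the equality is immediate, and \Cref{fact:bcirc_distr} can be proved by an entirely analogous block computation.
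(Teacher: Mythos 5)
Your proof is correct and the bookkeeping checks out: with zero-based indexing the $(i,j)$ block of $\bcirc{\cM}$ is $\mM_{(i-j)\bmod n}$, the $k$-th frontal slice of $\cM^*$ is $\bigl(\mM_{(n-k)\bmod n}\bigr)^*$ by \Cref{defn:transpose}, and the congruence $n-((i-j)\bmod n)\equiv j-i \pmod{n}$ shows both sides have $(i,j)$ block $\bigl(\mM_{(j-i)\bmod n}\bigr)^*$, so the matrices agree blockwise. The paper reaches the same conclusion by a genuinely different route: it first expands the block circulant as $\bcirc{\cM}=\sum_{i=0}^{n-1}\circulant{e_i}\otimes\mM_i$ (its \Cref{eqn:circ_form}) and then applies $(\mA\otimes\mB)^*=\mA^*\otimes\mB^*$ together with the identity $\circulant{e_i}^*=\circulant{e_{n-i}}$ and the slice-reversal in the tensor transpose. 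Your direct block-index comparison is more elementary and self-contained, needing no Kronecker-product identities or facts about circulants of standard basis vectors; what the paper's decomposition buys is reusable structure: the same expansion, combined with the mixed-product property, is exactly what drives its proof of \Cref{fact:bcirc_distr}, whereas your closing remark that \Cref{fact:bcirc_distr} follows by an ``entirely analogous'' block computation is a bit optimistic --- it does work, but there one must track a convolution-type sum $\sum_{k}\mA_{(i-j-k)\bmod n}\mB_{k}$ over an intermediate block index, so it is not a one-line analogue of the present argument.
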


\subsubsection{Orthogonal tensor projections}\label{subsubsec:ortho_proj}
These definitions and facts allow us to characterize orthogonal tensor projections under the t-product, which is key for proving convergence of TRK.

\begin{lemma}\label{lem:projections}
If $\cM^* \cM$ is invertible, then the tensor $\cP =  \cM \left(\cM^* \cM \right)^{-1} \cM^*$ is an orthogonal projection tensor.
\footnote{This result is also stated and discussed in \cite{kilmer2013third}. We provide a proof here for completeness.}
\end{lemma}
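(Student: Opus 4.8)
The plan is to reduce the tensor statement to a known matrix fact via the block-circulant representation, using \Cref{fact:bcirc_distr} and \Cref{fact:bcirc_transpose}. First I would clarify what ``orthogonal projection tensor'' should mean under the t-product: a tensor $\cP$ satisfying (i) idempotency $\cP \cP = \cP$ and (ii) symmetry $\cP^* = \cP$. So the goal is to verify these two properties for $\cP = \cM(\cM^*\cM)^{-1}\cM^*$.

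For symmetry, I would compute $\cP^*$ directly using the rule $(\cA\cB)^* = \cB^*\cA^*$ from \Cref{defn:transpose}, together with the identity $(\cN^{-1})^* = (\cN^*)^{-1}$ established in the excerpt for invertible tensors. Applying these to $\cP = \cM(\cM^*\cM)^{-1}\cM^*$ gives $\cP^* = \cM \big((\cM^*\cM)^{-1}\big)^* \cM^* = \cM \big((\cM^*\cM)^*\big)^{-1}\cM^*$, and since $(\cM^*\cM)^* = \cM^*(\cM^*)^* = \cM^*\cM$ (using $(\cM^*)^* = \cM$), we get $\cP^* = \cP$. This is essentially formal manipulation and I do not expect trouble here, though I should be careful that $\cM^*\cM$ being invertible is exactly the hypothesis that lets me write $(\cM^*\cM)^{-1}$ in the first place.

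For idempotency, I would compute $\cP\cP = \cM(\cM^*\cM)^{-1}\cM^* \cdot \cM(\cM^*\cM)^{-1}\cM^* = \cM(\cM^*\cM)^{-1}(\cM^*\cM)(\cM^*\cM)^{-1}\cM^*$, and then cancel the middle $(\cM^*\cM)(\cM^*\cM)^{-1} = \cI$ and use $\cM\cI = \cM$, $\cI\cM^* = \cM^*$ (\Cref{defn:Identity}), leaving $\cP\cP = \cM(\cM^*\cM)^{-1}\cM^* = \cP$. Again this is algebra in the t-product ring; the only thing to be attentive to is associativity of the t-product, which follows from \Cref{fact:bcirc_distr} since $\bcirc{\cdot}$ is a ring homomorphism into matrices (where associativity is automatic) and $\fold{\cdot}/\unfold{\cdot}$ are mutually inverse bijections.

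The main (mild) obstacle is really just pinning down the definition of orthogonal projection tensor and making sure it is the ``right'' one — i.e., that it corresponds under $\bcirc{\cdot}$ to an honest orthogonal (symmetric idempotent) matrix projection, which is what is needed later in the TRK convergence analysis. If the paper wants the geometric statement that $\cP$ projects onto the range of $\cM$ orthogonally with respect to the appropriate inner product, I would additionally note $\cP\cM = \cM$ (same cancellation as above) so the range of $\cM$ is fixed, and $(\cI - \cP)\cM = \zeros$, and then translate ``orthogonal'' into the statement $\langle \cP\cY, (\cI-\cP)\cZ\rangle = 0$ for the Frobenius inner product, which follows from symmetry and idempotency: $\langle \cP\cY,(\cI-\cP)\cZ\rangle = \langle \cY, \cP^*(\cI-\cP)\cZ\rangle = \langle\cY,(\cP - \cP\cP)\cZ\rangle = 0$. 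All pieces are routine once the two core properties are in hand, so I would present symmetry and idempotency as the two displayed computations and remark that these suffice.
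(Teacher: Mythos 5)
Your proposal is correct and follows essentially the same route as the paper: verify idempotency by cancelling $(\cM^*\cM)(\cM^*\cM)^{-1} = \cI$ in $\cP\cP$, and verify symmetry via the reversal rule $(\cA\cB)^* = \cB^*\cA^*$ together with $\left((\cM^*\cM)^{-1}\right)^* = (\cM^*\cM)^{-1}$. Your extra remarks (spelling out the symmetry of $\cM^*\cM$ and the range/orthogonality interpretation) are fine but not needed beyond what the paper does.
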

\begin{proof}
First, we show that $\cP=  \cM \left(\cM^* \cM \right)^{-1} \cM^*$ is a projection tensor, which follows by the following computation:
        \begin{align*}
        \cP\cP &= 
        \cM \left(\cM^*\cM \right)^{-1} \cM^*
        \cM \left(\cM^*\cM \right)^{-1} \cM^*\\
        &=  \cM \left(\cM^*\cM \right)^{-1} \cM^*\\
        &=\cP.
    \end{align*}
    
    From the multiplication reversal property of the Hermitian transpose given in Proposition 4.3 of \cite{kernfeld2015tensor},
    \begin{align*}
        \cP^* 
        &= \left(\cM \left(\cM^*\cM \right)^{-1} \cM^* \right)^* \\
        &= \cM \left(\left(\cM^*\cM \right)^{-1}\right)^* \cM^*\\
        &= \cM \left(\cM^*\cM \right)^{-1}\cM^*\\
        &= \cP
    \end{align*}
    
    and the tensor $\cP$ is an orthogonal projection.
\end{proof}

The convergence analysis of TRK, uses the following result.
\begin{lemma}\label{lem:bcirc_proj}
    If the tensor $\cP\in \C^{m\times m\times n}$ is an orthogonal projection, $\bcirc{\cP}$ is also an orthogonal projection.
\end{lemma}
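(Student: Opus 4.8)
The plan is to reduce the statement about $\bcirc{\cP}$ to the two defining algebraic identities of an orthogonal projection matrix: idempotency, $\bcirc{\cP}^2 = \bcirc{\cP}$, and self-adjointness, $\bcirc{\cP}^* = \bcirc{\cP}$. Both should follow mechanically from the corresponding tensor identities together with the two facts about $\bcirc{\cdot}$ already established in the excerpt.

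First I would use the hypothesis that $\cP$ is an orthogonal projection, which by \Cref{defn:orthogonal}-style reasoning (or rather by the characterization used in \Cref{lem:projections}) means $\cP \cP = \cP$ and $\cP^* = \cP$. Applying \Cref{fact:bcirc_distr} to the first identity gives
\begin{equation*}
    \bcirc{\cP}\bcirc{\cP} = \bcirc{\cP \cP} = \bcirc{\cP},
\end{equation*}
so $\bcirc{\cP}$ is idempotent. Applying \Cref{fact:bcirc_transpose} to the second identity gives
\begin{equation*}
    \bcirc{\cP}^* = \bcirc{\cP^*} = \bcirc{\cP},
\end{equation*}
so $\bcirc{\cP}$ is Hermitian. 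A matrix that is both idempotent and Hermitian is exactly an orthogonal projection matrix, which completes the argument.

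The only subtlety — and the one place I would be careful — is making sure the notion of "orthogonal projection tensor" used in \Cref{lem:projections} is genuinely equivalent to the pair of conditions (idempotent, self-adjoint), so that I am entitled to invoke both $\cP\cP=\cP$ and $\cP^*=\cP$ for the hypothesis of this lemma; the proof of \Cref{lem:projections} in fact verifies exactly these two properties, so this is really just a matter of citing that characterization rather than a genuine obstacle. There is no hard step here: the content of the lemma is entirely carried by \Cref{fact:bcirc_distr} and \Cref{fact:bcirc_transpose}, and the proof is a two-line computation once those are in hand.
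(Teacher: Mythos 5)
Your proof is correct and follows essentially the same route as the paper: idempotency via \Cref{fact:bcirc_distr} applied to $\cP\cP=\cP$, and Hermitian symmetry via \Cref{fact:bcirc_transpose} applied to $\cP^*=\cP$. Your closing remark about the characterization of orthogonal projection tensors matches how the paper uses the two properties verified in \Cref{lem:projections}.
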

\begin{proof}
    Since $\cP$ is symmetric,
    \begin{equation*}
        \bcirc{\cP}^* \overset{\Cref{fact:bcirc_transpose}}{=} \bcirc{\cP^*} =\bcirc{\cP}.
    \end{equation*}
    
To see that $\bcirc{\cP}$ is a projection, note that since $\cP$ is a projection tensor, 
\begin{equation*}
    \bcirc{\cP } = \bcirc{\cP\cP} \overset{\Cref{fact:bcirc_distr}}{=} \bcirc{\cP} \bcirc{\cP}.
\end{equation*}

\end{proof}

\section{Tensor randomized Kaczmarz}\label{sec:TRK}

Tensor randomized Kaczmarz is a Kaczmarz-type iterative method designed for t-product tensor linear systems. One notable difference between the t-product tensor and matrix linear systems is the interaction of the measurements $\cA_{i::}$ and $\mA_{i:}$ with the signals $\cX$ and $x$. For the products $\mA_{i:} x = b_i$ and $\mA_{i:} \mX = \mB_{i:}$, each value in the signal $\mX$ or $x$ is multiplied by a single element of the measurement $\mA_{i:}$.
In the tensor measurement product,
\begin{equation*}
    \cA_{i::} \cX = \fold{\bcirc{\cA_{i::}}\unfold{\cX}}\in \C^{n\times p}.
\end{equation*}
Since $\bcirc{\cA_{i::}}\in \C^{n\times\ell n}$, each element of $\cX$ is multiplied by $n$ elements in $\cA_{i::}$ and affects $n$ entries of the resulting product $\cB_{i::}$. 
Equivalently, each frontal face of $\cX$ is multiplied by each frontal face of $\cA_{i::}$. See Kilmer and Martin \cite{kilmer2011factorization} for more details and intuition for the t-product.

We propose the following TRK update for tensor linear systems
\begin{equation}\label{eqn:trk_update}
    \cX^{t+1} = \cX^t - \cA_{i_t::}^* \left(\cA_{i_t::}\cA_{i_t::}^*\right)^{-1}\left(\cA_{i_t::} \cX^t-\cB_{i_t::}\right).
\end{equation}
The $\cA_{i::}$ are row slices of the tensor $\cA$ as depicted in \Cref{fig:row_slice}. The index $i_t$ used at each iteration is selected according to a probability distribution over the row indices $i\in[m-1]$. The TRK algorithm is detailed in \Cref{algo:TRK}.

\begin{algorithm}[t]

\begin{algorithmic}
\State \textbf{Input:}  $\cX^0\in \C^{\ell\times p\times n},$ $\cA\in \C^{m\times \ell \times n}$, $\cB\in\C^{m\times p \times n}$,  and probabilities $p_0, \dots, p_{m-1}$ corresponding to each row slice of $\cA$
\For {$t = 0, 1, 2, \dots$}
    \State  Sample $i_t \sim p_i$
	\State  $\cX^{t+1} = \cX^t - \cA_{i_t::}^* \left(\cA_{i_t::}\cA_{i_t::}^*\right)^{-1}\left(\cA_{i_t::} \cX^t-\cB_{i_t::}\right).$
\EndFor
\State \textbf{Output:} last iterate $\cX^{t+1}$
\end{algorithmic}
\caption{Tensor RK}
\label{algo:TRK}
\end{algorithm}

Let $\cP_{i} = \cA_{i::}^* \left(\cA_{i::}\cA_{i::}^*\right)^{-1}\cA_{i::}$. Under the assumption that $\cA_{i::}\cA_{i::}^*$ is invertible, by \Cref{lem:projections}, $\cP_{i}$ is an orthogonal projection onto the range of $\cA_{i::}$. Consequently, at each iteration, the current iterate $\cX^{t}$ is projected onto the solution space of the sub-sampled system $\cA_{i_t::}\cX = \cB_{i_t::}$. Note that this is the natural analogue of the MRK update, which projects the current iterate $x^t$ onto the solution space of $\mA_{i_t} x = b_{i_t}$.

Recall the MRK update given in \Cref{eqn:matrixRK}.
The multiplication by $\left(\cA_{i_t ::} \cA_{i_t::}^*\right)^{-1}$ in the TRK update serves an analogous role to normalization by the squared row norms, $\norm{\mA_{i_t:}}^2$, in the MRK update. The following assumption insures that the tensor $\cA_{i_t ::} \cA_{i_t::}^*$ is invertible so that the iterates are well defined.
\begin{assumption}\label{assump:invertibility_req}
Assume that $\cA_{i::}\cA_{i::}^*$ is invertible.
\end{assumption}
Note that in order for  $\cA_{i::}\cA_{i::}^*$ to be invertible, $ \diag{\mF_n \bcirc{\cA_{i::}\cA_{i::}^*} \mF_n^*} = \diag{\mD}$ must contain no non-zero entries. If the matrix $\bcirc{\cA_{i::}\cA_{i::}^*}$ is invertible, then the tensor $\cA_{i::}\cA_{i::}^*$ is also and its inverse can be calculated explicitly as follows.

\begin{lemma}\label{lem:inverse_AAT}
    The inverse of $\cA_{i::}\cA_{i::}^*$ under the t-product is 
\begin{equation}
\label{eq:invAAT}
\left(\cA_{i::}\cA_{i::}^*\right)^{-1} =\fold{\frac{1}{\sqrt{n}} \mF_n^* \; \diag{\mD^{-1}} },
    \end{equation}
    where $\mF_n$ is the $n\times n$ Discrete Fourier Transform (DFT) matrix and $\mD$ is a diagonal matrix such that $\bcirc{\cA_{i::}\cA_{i::}^*} = \mF_n^* \mD \mF_n$.
\end{lemma}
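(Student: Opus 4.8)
The plan is to prove the identity first at the level of block-circulant matrices, where $\cA_{i::}\cA_{i::}^*$ is just an ordinary $n\times n$ circulant matrix, and then transport the result back to tensors using \Cref{fact:bcirc_distr} together with the injectivity of $\bcirc{\cdot}$. Write $\cM := \cA_{i::}\cA_{i::}^*$. Since $\cA_{i::}$ has size $1\times\ell\times n$, the tensor $\cM$ has size $1\times 1\times n$, so each frontal slice $\mM_k$ is a scalar and $\bcirc{\cM}$ is a genuine $n\times n$ circulant matrix. Such a matrix is diagonalized by the unitary DFT matrix, which is precisely the assertion that there is a diagonal $\mD$ with $\bcirc{\cM} = \mF_n^*\mD\mF_n$ (and the convention is forced to be the unitary one, since the excerpt also writes $\diag{\mF_n\bcirc{\cM}\mF_n^*}=\diag{\mD}$, i.e.\ $\mF_n\mF_n^* = \mI_n$). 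The hypothesis that $\bcirc{\cM}$ is invertible is then equivalent to $\mD$ having no zero on its diagonal, and in that case $\bcirc{\cM}^{-1} = \mF_n^*\mD^{-1}\mF_n$, which is again circulant.

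Next I would read off the first column of $\bcirc{\cM}^{-1}$. Because $\mF_n$ is the unitary DFT matrix, its first column (the zero-frequency column) is $\tfrac{1}{\sqrt n}\ones$, so
\[
\bcirc{\cM}^{-1}\be_0 \;=\; \mF_n^*\mD^{-1}\mF_n\be_0 \;=\; \tfrac{1}{\sqrt n}\,\mF_n^*\mD^{-1}\ones \;=\; \tfrac{1}{\sqrt n}\,\mF_n^*\diag{\mD^{-1}},
\]
since multiplying the diagonal matrix $\mD^{-1}$ by $\ones$ returns the vector $\diag{\mD^{-1}}$ of its diagonal entries. A circulant matrix is determined by its first column, so if we set $\cC := \fold{\tfrac{1}{\sqrt n}\mF_n^*\diag{\mD^{-1}}}$ — a $1\times1\times n$ tensor whose unfolding $\unfold{\cC} = \tfrac{1}{\sqrt n}\mF_n^*\diag{\mD^{-1}}$ is exactly $\bcirc{\cM}^{-1}\be_0$ — then $\bcirc{\cC}$ and $\bcirc{\cM}^{-1}$ are two circulant matrices with the same first column, hence $\bcirc{\cC} = \bcirc{\cM}^{-1}$.

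Finally I would lift this matrix identity to the tensor setting. By \Cref{fact:bcirc_distr}, $\bcirc{\cM\cC} = \bcirc{\cM}\bcirc{\cC} = \bcirc{\cM}\bcirc{\cM}^{-1} = \mI_n$, and symmetrically $\bcirc{\cC\cM} = \mI_n$; since $\mI_n = \bcirc{\cI}$ for the $1\times1\times n$ identity tensor $\cI$ of \Cref{defn:Identity}, and since $\bcirc{\cdot}$ is injective (its first block column is exactly $\unfold{\cdot}$, from which the tensor is recovered by $\fold{\cdot}$), we conclude $\cM\cC = \cC\cM = \cI$. Thus $\cC = \cM^{-1} = (\cA_{i::}\cA_{i::}^*)^{-1}$, which is \Cref{eq:invAAT}.

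I expect the only real friction to be bookkeeping the DFT normalization: one must commit to $\mF_n$ being the \emph{unitary} DFT (so $\mF_n^*\mF_n = \mI_n$ and the first column of $\mF_n$ is $\tfrac{1}{\sqrt n}\ones$), confirm this is the convention under which the stated factorization $\bcirc{\cM} = \mF_n^*\mD\mF_n$ holds, and carry the resulting $\tfrac{1}{\sqrt n}$ through so that it lands in front of $\mF_n^*\diag{\mD^{-1}}$ as written. Everything else is the textbook facts that a circulant matrix is diagonalized by the DFT and is recoverable from its first column, plus \Cref{fact:bcirc_distr}.
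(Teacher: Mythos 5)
Your proposal is correct and is essentially the paper's own argument: both proofs observe that $\cA_{i::}\cA_{i::}^*$ is a tube fiber so $\bcirc{\cA_{i::}\cA_{i::}^*}$ is circulant, diagonalize it by the unitary DFT, invert to get $\mF_n^*\mD^{-1}\mF_n$, and then extract the first column via $\mF_n\unfold{\cI}=\tfrac{1}{\sqrt n}\ones$, yielding $\tfrac{1}{\sqrt n}\mF_n^*\diag{\mD^{-1}}$. The only difference is cosmetic: the paper solves the defining equation $\unfold{\cW}=\bcirc{\cA_{i::}\cA_{i::}^*}^{-1}\unfold{\cI}$ directly, while you package the same computation as a candidate tensor verified through \Cref{fact:bcirc_distr} and the injectivity of $\bcirc{\cdot}$, which is slightly more careful about the two-sidedness of the inverse but not a different route.
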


\begin{proof}
    The inverse of $\cA_{i::}\cA_{i::}^*$ is given by the tube fiber $\cW$ that satisfies 
    \begin{equation*}
        \cA_{i::}\cA_{i::}^* \cW = \fold {\bcirc{\cA_{i::}\cA_{i::}^*} \unfold{\cW})}  = \cI.
    \end{equation*}
    Since $\cA_{i::}\cA_{i::}^*\in \C^{1\times 1\times n}$ is a tube fiber, $\bcirc{\cA_{i::}\cA_{i::}^*}$ is a circulant matrix. Circulant matrices are diagonalizable by the DFT given by $\mF_n$, and we can thus write $\bcirc{\cA_{i::}\cA_{i::}^*} = \mF_n^* \mD \mF_n$ for some diagonal matrix $\mD$. Inverting this, we have $[\bcirc{\cA_{i::}\cA_{i::}^*}]^{-1} = \mF_n^* \mD^{-1} \mF_n$.
    
    Thus 
    \begin{align*}
        \unfold {\cW} &= \left[\bcirc{\cA_{i::}\cA_{i::}^*}\right]^{-1}\unfold{\cI} = \mF_n^* \mD^{-1} \mF_n \unfold{\cI}.
    \end{align*}
Using the definition of the DFT matrix, 
    \begin{equation*}
        \mF_n \unfold{\cI} 
        = \mF_n \begin{pmatrix} 
        1 \\
        0 \\
        \vdots \\
        0
        \end{pmatrix}
        = \frac{1}{\sqrt{n}}\begin{pmatrix} 
        1 \\
        1 \\
        \vdots \\
        1
        \end{pmatrix}.
    \end{equation*}
    Thus, 
    \begin{equation*}
        \cW = \fold{\frac{1}{\sqrt{n}} \mF_n^* \; \diag{\mD^{-1}}} .
    \end{equation*}
\end{proof}

\section{Convergence}
We demonstrate that the TRK method given by the update in \Cref{eqn:trk_update} satisfies a convergence result analogous to that of the matrix, vector setting. \Cref{thm:conv} shows that in expectation, the TRK algorithm will converge linearly to the solution of a consistent tensor system if 
\begin{equation*}
    \rho := 1-\sigma_{\min}(\E{\bcirc{\cP_{i}}}) < 1.
\end{equation*} 
The constant $\rho$  is often referred to as the \emph{contraction coefficient}. To show that this term is indeed less than one, we take advantage of the fact that the t-product can be computed in Fourier space. This analysis is presented in~\Cref{sec:Fourier_analysis}. In this section, the TRK algorithm is analyzed with a more classical approach for Kaczmarz-type algorithms and the result is compared to the standard MRK convergence guarantee.

\begin{theorem}\label{thm:conv}
Let $\cX^*$ be such that $\cA \cX^* = \cB$ and $\cX^t$ be the $t\thup$ approximation of $\cX^*$ given by the updates of \Cref{eqn:trk_update} with initial iterate $\cX^0$ and indices $i$ sampled independently from a probability distribution $\mathcal{D}$ at each iteration. Denote the orthogonal projection $\cP_{i} = \cA_{i::}^* \left(\cA_{i::}\cA_{i::}^*\right)^{-1}\cA_{i::}$. The expected error at the $t+1\stup$ iteration satisfies 
\begin{align*}
    \E{\norm{\cX^{t+1}-\cX^*}_F^2\middle| \cX^{0}} \le (1-\sigma_{\min}(\E{\bcirc{\cP_{i}}}))^{t+1} \norm{\cX^{0}-\cX^*}_F^2,
\end{align*} 
where the expectation is taken over the probability distribution $\mathcal{D}$, $\sigma_{\min}(\mM)$ denotes the smallest singular value of $\mM$, and $\|\cM\|_F^2$ is the sum of squared entries of the tensor $\cM$.
\end{theorem}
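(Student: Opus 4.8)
The plan is to reduce the tensor recursion to an ordinary matrix recursion via the block-circulant representation and then run the classical Kaczmarz argument. First I would use consistency of the system: since $\cA\cX^* = \cB$, a row slice of a t-product is the t-product of the row slice, so $\cA_{i::}\cX^* = \cB_{i::}$ for every $i$. Subtracting $\cX^*$ from both sides of \Cref{eqn:trk_update} and grouping the terms in $\cX^t - \cX^*$ then gives the error recursion
\begin{equation*}
    \cX^{t+1} - \cX^* = (\cX^t - \cX^*) - \cP_{i_t}(\cX^t - \cX^*), \qquad \cP_{i} = \cA_{i::}^*\left(\cA_{i::}\cA_{i::}^*\right)^{-1}\cA_{i::},
\end{equation*}
where $\cP_{i_t}$ is the orthogonal projection tensor of \Cref{lem:projections}.

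Next I would pass to unfolded coordinates. Writing $\bE^t := \unfold{\cX^t - \cX^*}$ and using $\unfold{\cP\cM} = \bcirc{\cP}\unfold{\cM}$ (\Cref{defn:tprod}) together with linearity of $\unfold{\cdot}$, the recursion becomes $\bE^{t+1} = (\bI_{mn} - \bcirc{\cP_{i_t}})\bE^t$. Since the unfolding merely rearranges entries, $\norm{\cM}_F^2 = \norm{\unfold{\cM}}_F^2$, so it suffices to control $\norm{\bE^t}_F^2$. By \Cref{lem:bcirc_proj}, $\bcirc{\cP_{i_t}}$ is an orthogonal projection, hence so is $\bI_{mn} - \bcirc{\cP_{i_t}}$; using idempotence and Hermitian symmetry of this projection,
\begin{equation*}
    \norm{\bE^{t+1}}_F^2 = \operatorname{tr}\!\left((\bE^t)^*(\bI_{mn} - \bcirc{\cP_{i_t}})\bE^t\right) = \norm{\bE^t}_F^2 - \operatorname{tr}\!\left((\bE^t)^*\bcirc{\cP_{i_t}}\bE^t\right).
\end{equation*}

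Then I would take the conditional expectation over $i_t\sim\mathcal{D}$ given $\cX^t$, using linearity of trace and expectation, to obtain $\E{\norm{\bE^{t+1}}_F^2\mid\cX^t} = \norm{\bE^t}_F^2 - \operatorname{tr}((\bE^t)^*\,\E{\bcirc{\cP_i}}\,\bE^t)$. The matrix $\E{\bcirc{\cP_i}}$ is a convex combination of orthogonal projections, hence Hermitian positive semidefinite with spectrum in $[0,1]$, so its smallest singular value equals its smallest eigenvalue; applying the Rayleigh lower bound column-by-column to $\bE^t$ gives $\operatorname{tr}((\bE^t)^*\E{\bcirc{\cP_i}}\bE^t) \ge \sigma_{\min}(\E{\bcirc{\cP_i}})\norm{\bE^t}_F^2$. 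This yields the one-step contraction $\E{\norm{\cX^{t+1}-\cX^*}_F^2\mid\cX^t} \le (1-\sigma_{\min}(\E{\bcirc{\cP_i}}))\norm{\cX^t-\cX^*}_F^2$, and the theorem follows by iterating this bound and applying the tower property to condition back to $\cX^0$.

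The genuinely routine parts are the identity $\norm{\cM}_F^2 = \norm{\unfold{\cM}}_F^2$ and the fact that row slices commute with the t-product in the needed sense. The one load-bearing point — and where I would be most careful — is that the contraction is governed by $\sigma_{\min}$ of the \emph{expected} block-circulant projection rather than by any single slice: this requires the matricization $\norm{\cM}_F = \norm{\unfold{\cM}}_F$ so that the multi-column signal $\cX$ is handled columnwise, and it requires $\bcirc{\cP_i}$ to be a true orthogonal projection (\Cref{lem:bcirc_proj}) so that $\bI_{mn}-\bcirc{\cP_i}$ contracts precisely in the quadratic form whose expectation is bounded below by $\sigma_{\min}$. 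That $\rho = 1-\sigma_{\min}(\E{\bcirc{\cP_i}})$ is in fact strictly less than $1$ is not needed for this statement and is deferred to the Fourier-domain analysis of \Cref{sec:Fourier_analysis}.
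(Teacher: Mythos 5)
Your proposal is correct and takes essentially the same route as the paper: error recursion via consistency, matricization through $\bcirc{\cdot}$, the orthogonal-projection property of $\bcirc{\cP_i}$ from \Cref{lem:bcirc_proj}, a Rayleigh-quotient lower bound on the quadratic form in $\E{\bcirc{\cP_i}}$, and the tower property — your trace computation is just an inlined version of the paper's tensor Pythagorean theorem (\Cref{lem:pythag}), applied column-by-column exactly as in the paper. The only (cosmetic) slip is dimensional: since $\cP_i\in\C^{\ell\times\ell\times n}$, the identity in the unfolded recursion should be $\bI_{\ell n}$, not $\bI_{mn}$.
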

The proof of \Cref{thm:conv} mirrors the standard analysis of MRK making use of the linear algebra mimetic properties of the t-product. More specifically, the proof proceeds as follows. First, we show that the expected error at the $t\thup$ iteration is bounded above by the error from the previous iteration minus a projected error term using a tensor Pythagorean theorem. Then, a lower bound on the norm of the projected error is obtained to lead to the desired result. 
The proof of \Cref{thm:conv} is provided here and more technical components that extend simple properties for tensors are deferred to the appendix. 

\begin{proof}
Let $\cX^*$ be such that $\cA \cX^* = \cB$. 
Subtracting $\cX^*$ from both sides of the TRK update given in  \Cref{eqn:trk_update},
\begin{align*}
    \cX^{t+1}-\cX^* &= \cX^t-\cX^* - \cA_{i_t::}^* \left(\cA_{i_t::}\cA_{i_t::}^*\right)^{-1}\left(\cA_{i_t::} \cX^t-\cB_{i_t::}\right)\\
    &= \cX^t-\cX^* - \cA_{i_t::}^* \left(\cA_{i_t::}\cA_{i_t::}^*\right)^{-1}\cA_{i_t::}\left( \cX^t-\cX^*\right)\\
    &= \left( \cI - \cP_{i_t}\right)\left( \cX^t-\cX^*\right).
\end{align*}
To simplify and condense notation, we will use $\cE^{t} = \cX^{t}-\cX^*$ to represent the error at iteration $t$. Taking the Frobenius norm of the equality above,
\begin{equation*}
    \norm{\cE^{t+1}}^2_F = \norm{\left( \cI - \cP_{i_t}\right) \cE^t}_F^2.
\end{equation*}

As holds for orthogonal matrix projections, we can decompose this error as 
\begin{equation}\label{eqn:pythag_orthog}
      \norm{\left( \cI - \cP_{i_t}\right)\cE^t}_F^2 =\norm{\cE^t}_F^2 - \norm{ \cP_{i_t}\cE^t}_F^2
\end{equation}
using the Pythagorean theorem (see \Cref{lem:pythag}). 

Thus, \Cref{eqn:pythag_orthog} holds and
\begin{equation*}
        \norm{\cE^{t+1}}_F^2 = \norm{\cE^t}_F^2 - \norm{\cP_{i_t}\cE^t}_F^2.
\end{equation*}

Since the distribution from which the rows are sampled is fixed for all iterations, we drop the dependence on the iteration $t$ when taking expectations.
Taking the expectation over all row slice indices $i$, 
\begin{equation}\label{eqn:exp_outside}
        \E{\norm{\cE^{t+1}}_F^2\middle| \cX^{t}} = \norm{\cE^t}_F^2 - \E{\norm{\cP_i\cE^t}_F^2}.
\end{equation}
Note that
\begin{align*}
    \norm{\cP_i\cE^t}_F^2
    &=\norm{\bcirc{\cP_i}\unfold{\cE^t}}_F^2
     =\sum _{j=1}^{p}\norm{\bcirc{\cP_i}\unfold{\cE^t}_{:j}}_2^2.
\end{align*}

Now, since $\bcirc{\cP_{i}}$ is an orthogonal projection,
\begin{align}\label{eqn:innerprod}
    \E{\norm{\cP_i\cE^t}_F^2}
    &= \sum _{j=1}^{p}\E{\langle \bcirc{\cP_i}\unfold{\cE^t}_{:j},\bcirc{\cP_i}\unfold{\cE^t}_{:j}\rangle} \nonumber \\
    &= \sum _{j=1}^{p}\langle \E{\bcirc{\cP_i}}\unfold{\cE^t}_{:j},\unfold{\cE^t}_{:j}\rangle.
\end{align}
Since $\E{\bcirc{\cP_i}}$ is symmetric, 
\begin{align*}
    \E{\norm{\cP_i\cE^t}_F^2} 
    & \ge \sigma_{\min}\left(\E{\bcirc{\cP_i}}\right) \sum _{j=1}^{p} \norm{\unfold{\cE^t}_{:j}}_2^2\\
    & \ge \sigma_{\min}\left(\E{\bcirc{\cP_i}}\right) \sum _{j=1}^{p} \norm{\cE^t_{:j:}}_F^2\\
    & \ge \sigma_{\min}\left(\E{\bcirc{\cP_i}}\right)  \norm{\cE^t}_F^2.
\end{align*}
Making this substitution in \Cref{eqn:exp_outside}, we then have 
\begin{align*}
    \E{\norm{\cE^{t+1}}_F^2 \middle| \cX^{t}} \le (1-\sigma_{\min}(\E{\bcirc{\cP_{i}}})) \norm{\cE^t}_F^2.
\end{align*} 
Since the row slice indices $i$ are sampled independently, the conditional expectation can be iterated to obtain, 
\begin{align*}
    \E{\norm{\cE^{t+1}}_F^2\middle| \cX^{0}} \le (1-\sigma_{\min}(\E{\bcirc{\cP_{i}}}))^{t+1} \norm{\cE^0}_F^2.
\end{align*}
\end{proof}

The convergence guarantee of \Cref{thm:conv} is analogous to that of \cite{strohmer2009randomized} for MRK. 
If rows are sampled with probabilities proportional to the squared row norms of $\mA$, the expected approximation error for iterates of MRK is upper bounded as:
\begin{align*}
    \E{\norm{x^{t+1}-x^*}^2 \middle| x^0}& \le \left(1-\sigma_{\min}\left(\frac{\E{\mA_{i:} \mA_{i:}^* }}{\norm{\mA}_F^2}\right)\right)^{t+1} \norm{x^{0}-x^*}^2\\
    & \le \left(1-\frac{\sigma_{\min}\left(\mA^* \mA\right)}{\norm{\mA}_F^2}\right)^{t+1} \norm{x^{0}-x^*}^2.
\end{align*} 
Both the TRK and MRK convergence guarantees depend on the minimal singular value of the expectation over the possible projections onto the rows or row slices for the matrix and tensor versions respectively.

\section{Analysis of TRK in the Fourier domain}\label{sec:Fourier_analysis}

The t-product can be computed efficiently using the Fast Fourier Transform (FFT), since circulant matrices are diagonalized by the DFT. Similarly, we can analyze the convergence of TRK in the Fourier domain and capitalize on the resulting block-diagonal structure. In this section, we present a convergence analysis in the Fourier domain to derive a more interpretable convergence guarantee for TRK. We describe how the TRK update can be performed efficiently in the Fourier domain and additionally demonstrate that TRK is equivalent to performing block MRK on the linear system in the Fourier domain.

\subsection{Notation and preliminary facts}\label{subsec:prelim_Fourier_facts}

We first introduce some additional notation and basic facts that will be used throughout this section. The notation and definitions are adopted from \cite{kilmer2011factorization}. 
Let $\cM\in\C^{m\times \ell \times  n}$ and $\widehat{\cM}$ denote the tensor resulting from applying the DFT matrix to each of the tube fibers of $\cM$. This is operation is referred to in previous literature as a \emph{mode-3 FFT}. Fact 2 of \cite{kilmer2011factorization}, guarantees that
\begin{equation}\label{eqn:fact_2_kilmer}
    \bdiag{\widehat{\cM}}:= \left(\mF_n \otimes \mI_m\right)\bcirc{\cM}
    \left(\mF_n^* \otimes \mI_\ell\right) = 
    \begin{pmatrix}
        \widehat{\mM}_0 &&& \\
        & \widehat{\mM}_1 &&& \\
         && \ddots & \\
        &&& \widehat{\mM}_{n-1}  
    \end{pmatrix},
\end{equation} 
where $\widehat{\mM}_k$ is the $k\thup$ frontal face of $\widehat{\cM}$, $\otimes$ denotes the Kronecker product, $\mF_n$ is the $n\times n$ DFT matrix, and $\bdiag{\widehat{\cM}}$ is the block diagonal matrix formed by the frontal faces of $\widehat{\cM}$. 

We now present several facts which hold true for tensors under the t-product. They will be useful for performing calculations in the Fourier domain.

\begin{fact}\label{fact:Fourier_distributes}
For tensors $\cA$ and $\cB$ the following holds:
    \begin{equation*}
        \bdiag{\widehat{\cA\cB}} = \bdiag{\widehat{\cA}}\bdiag{\widehat{\cB}}.
    \end{equation*}
\end{fact}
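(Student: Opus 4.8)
The plan is to reduce the claimed identity $\bdiag{\widehat{\cA\cB}} = \bdiag{\widehat{\cA}}\bdiag{\widehat{\cB}}$ to the already-established \Cref{fact:bcirc_distr}, namely $\bcirc{\cA\cB} = \bcirc{\cA}\bcirc{\cB}$, by conjugating with the Fourier matrices. First I would recall the block-diagonalization identity \Cref{eqn:fact_2_kilmer}, which says that for a tensor $\cM \in \C^{m \times \ell \times n}$ we have $\bdiag{\widehat{\cM}} = (\mF_n \otimes \mI_m)\bcirc{\cM}(\mF_n^* \otimes \mI_\ell)$. Applying this to $\cM = \cA\cB$ (where $\cA \in \C^{m\times \ell \times n}$, $\cB \in \C^{\ell \times p \times n}$, so $\cA\cB \in \C^{m \times p \times n}$) gives
\begin{equation*}
    \bdiag{\widehat{\cA\cB}} = (\mF_n \otimes \mI_m)\bcirc{\cA\cB}(\mF_n^* \otimes \mI_p).
\end{equation*}

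Next I would substitute $\bcirc{\cA\cB} = \bcirc{\cA}\bcirc{\cB}$ from \Cref{fact:bcirc_distr} and insert the identity $\mF_n^* \mF_n = \mI_n$ in the middle in the form $(\mF_n^* \otimes \mI_\ell)(\mF_n \otimes \mI_\ell) = \mI_{\ell n}$, using the mixed-product property of the Kronecker product. Concretely,
\begin{align*}
    \bdiag{\widehat{\cA\cB}} &= (\mF_n \otimes \mI_m)\bcirc{\cA}\bcirc{\cB}(\mF_n^* \otimes \mI_p)\\
    &= (\mF_n \otimes \mI_m)\bcirc{\cA}(\mF_n^* \otimes \mI_\ell)(\mF_n \otimes \mI_\ell)\bcirc{\cB}(\mF_n^* \otimes \mI_p)\\
    &= \bdiag{\widehat{\cA}}\,\bdiag{\widehat{\cB}},
\end{align*}
where the last equality again applies \Cref{eqn:fact_2_kilmer}, once to $\cA$ and once to $\cB$. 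Since multiplying the two block-diagonal matrices $\bdiag{\widehat{\cA}}$ and $\bdiag{\widehat{\cB}}$ blockwise yields a block-diagonal matrix with $k\thup$ block $\widehat{\mA}_k \widehat{\mB}_k$, this also records the useful elementwise consequence $\widehat{(\cA\cB)}_k = \widehat{\mA}_k\widehat{\mB}_k$ for each $k \in [n-1]$.

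The only mildly delicate point — and the main thing to get right — is bookkeeping the three different "trailing" dimensions $m, \ell, p$ that appear in the Kronecker factors: the right factor for $\bcirc{\cA\cB}$ must be $\mF_n^* \otimes \mI_p$, the inserted resolution of the identity must use $\mI_\ell$ (the shared contracted dimension), and one has to check that the Kronecker mixed-product rule $(\mF_n \otimes \mI_\ell)^{-1} = \mF_n^* \otimes \mI_\ell$ and $(\mF_n \otimes \mI_m)(\mF_n^* \otimes \mI_m) = \mI_{mn}$ are being applied with matching block sizes. None of this is hard, but it is where an error would creep in. Everything else is a direct substitution, so I expect the proof to be only a few lines.
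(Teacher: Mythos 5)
Your proposal is correct and matches the paper's proof essentially line for line: apply \Cref{eqn:fact_2_kilmer} to $\cA\cB$, substitute $\bcirc{\cA\cB}=\bcirc{\cA}\bcirc{\cB}$ via \Cref{fact:bcirc_distr}, insert $\left(\mF_n^*\otimes \mI_\ell\right)\left(\mF_n\otimes \mI_\ell\right)=\mI_{\ell n}$, and reapply \Cref{eqn:fact_2_kilmer} to each factor. Your dimension bookkeeping ($\mI_m$, $\mI_\ell$, $\mI_p$) is exactly as in the paper, so there is nothing to add.
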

\begin{proof}
Let $\cA \in \C^{m\times \ell\times n}$ and $\cB\in \C^{\ell\times p\times n}$, then
\begin{align*}
    \bdiag{\widehat{\cA\cB}}
     &\overset{\eqref{eqn:fact_2_kilmer}}{=} \left(\mF_n\otimes \mI_m\right) \bcirc{{\cA}\cB} \left(\mF_n^*\otimes \mI_p\right)\\
    &\overset{\Cref{fact:bcirc_distr}}{=} \left(\mF_n\otimes \mI_m\right) \bcirc{{\cA}}\bcirc{\cB} \left(\mF_n^*\otimes \mI_p\right)\\
     &= \left(\mF_n\otimes \mI_m\right) \bcirc{{\cA}}\left(\mF_{n}^* \otimes \mI_{\ell}\right)\left(\mF_{n} \otimes \mI_{\ell}\right)\bcirc{\cB} \left(\mF_n^*\otimes \mI_p\right)\\
     &\overset{\eqref{eqn:fact_2_kilmer}}{=} \bdiag{\widehat{\cA}}\bdiag{\widehat{\cB}}.
\end{align*}
\end{proof}

\begin{fact}\label{fact:Fourier_distributes_add}
Addition and~~$\widehat{{\cdot}}$~~are commutative
    \begin{equation*}
        \widehat{\cA + \cB} = \widehat{\cA} + \widehat{\cB}.
    \end{equation*}
\end{fact}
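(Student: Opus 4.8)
The statement to prove is Fact~\ref{fact:Fourier_distributes_add}, that $\widehat{\cA+\cB} = \widehat{\cA}+\widehat{\cB}$ for tensors $\cA, \cB$ of matching dimensions.

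\textbf{Proof plan.} The plan is to unwind the definition of the hat operation and exploit the linearity of the DFT. Recall that $\widehat{\cM}$ is obtained by applying the $n\times n$ DFT matrix $\mF_n$ to each tube fiber of $\cM$; equivalently, in terms of the unfolding, $\unfold{\widehat{\cM}} = (\mF_n \otimes \mI_m)\unfold{\cM}$ (this is the mode-3 FFT, consistent with \Cref{eqn:fact_2_kilmer}). First I would note that $\unfold{\cdot}$ is itself a linear operation on tensors of a fixed size, so $\unfold{\cA+\cB} = \unfold{\cA} + \unfold{\cB}$, since stacking frontal slices commutes with entrywise addition. Then I would apply the matrix $(\mF_n\otimes\mI_m)$ on the left and use distributivity of matrix multiplication over matrix addition:
\begin{align*}
    \unfold{\widehat{\cA+\cB}}
    &= (\mF_n\otimes\mI_m)\unfold{\cA+\cB} \\
    &= (\mF_n\otimes\mI_m)\left(\unfold{\cA}+\unfold{\cB}\right) \\
    &= (\mF_n\otimes\mI_m)\unfold{\cA} + (\mF_n\otimes\mI_m)\unfold{\cB} \\
    &= \unfold{\widehat{\cA}} + \unfold{\widehat{\cB}} \\
    &= \unfold{\widehat{\cA}+\widehat{\cB}}.
\end{align*}
Finally, applying $\fold{\cdot}$ to both ends (which is the inverse of $\unfold{\cdot}$ and hence also linear) yields $\widehat{\cA+\cB} = \widehat{\cA}+\widehat{\cB}$, as desired.

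\textbf{Alternative (more elementary) route.} If one prefers to avoid even the unfolding formalism, the statement follows directly from the entrywise description: the $(i,j)$ tube fiber of $\widehat{\cA+\cB}$ is $\mF_n$ applied to the $(i,j)$ tube fiber of $\cA+\cB$, which is $\mF_n$ applied to the sum of the $(i,j)$ tube fibers of $\cA$ and $\cB$; by linearity of $\mF_n$ acting on vectors in $\C^n$, this equals the sum of $\mF_n$ applied to each, i.e.\ the $(i,j)$ tube fiber of $\widehat{\cA}+\widehat{\cB}$. Since this holds for all $i,j$, the two tensors agree.

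\textbf{Main obstacle.} There is essentially no obstacle here: the fact is a one-line consequence of the linearity of the DFT and of the fold/unfold bijection. The only thing to be careful about is bookkeeping — making sure that $\cA$ and $\cB$ have the same dimensions (so that the sum and all the operations are well defined) and that the Kronecker-product matrix $(\mF_n\otimes\mI_m)$ is the correct one for the shared first dimension $m$. I would state the linearity of $\unfold{\cdot}$ explicitly as the one nontrivial observation, and keep the rest as a short computation.
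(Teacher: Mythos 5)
Your proof is correct, but it takes a different (and more elementary) route than the paper. The paper proves the identity at the level of the block-diagonal matrices: it applies Fact~2 of Kilmer--Martin, i.e.\ \Cref{eqn:fact_2_kilmer}, to write $\bdiag{\widehat{\cA+\cB}} = \left(\mF_n\otimes\mI_m\right)\bcirc{\cA+\cB}\left(\mF_n^*\otimes\mI_\ell\right)$, uses linearity of $\bcirc{\cdot}$ and distributivity of the two-sided Kronecker conjugation, and concludes $\bdiag{\widehat{\cA+\cB}} = \bdiag{\widehat{\cA}}+\bdiag{\widehat{\cB}}$; this keeps the statement in exactly the $\bdiag{\widehat{\cdot}}$ calculus in which it is later applied in \Cref{sec:Fourier_analysis}. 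You instead work directly from the definition of $\widehat{\cdot}$ as a mode-3 DFT, either via the one-sided identity $\unfold{\widehat{\cM}} = (\mF_n\otimes\mI_m)\unfold{\cM}$ together with linearity of $\unfold{\cdot}$, or tube-fiber-wise via linearity of $\mF_n$ on $\C^n$. Your argument avoids $\bcirc{\cdot}$ and \Cref{eqn:fact_2_kilmer} entirely and establishes the equality of the tensors themselves rather than of their $\bdiag$ images (the two are equivalent, since the frontal slices determine the tensor, but your statement is marginally more direct); the paper's version has the advantage of uniformity with the proofs of the neighboring \Crefrange{fact:Fourier_distributes}{fact:Fourier_commutes_w_inverse}, all of which run through the same conjugation identity. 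Either proof is acceptable.
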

\begin{proof}
Let $\cA \in \C^{m\times \ell\times n}$ and $\cB\in \C^{m \times \ell \times n}$, then
\begin{align*}
    \bdiag{\widehat{\cA + \cB}}
     &\overset{\eqref{eqn:fact_2_kilmer}}{=} \left(\mF_n\otimes \mI_m\right) \bcirc{{\cA} + \cB} \left(\mF_n^*\otimes \mI_{\ell}\right)\\
    &= \left(\mF_n\otimes \mI_m\right)\left( \bcirc{\cA} + \bcirc{\cB}\right) \left(\mF_n^*\otimes \mI_{\ell}\right)\\
     &= \left(\mF_n\otimes \mI_m\right) \bcirc{{\cA}}\left(\mF_{n}^* \otimes \mI_{\ell}\right) + \left(\mF_{n} \otimes \mI_{m}\right)\bcirc{\cB} \left(\mF_n^*\otimes \mI_{\ell}\right)\\
     &\overset{\eqref{eqn:fact_2_kilmer}}{=} \bdiag{\widehat{\cA}} + \bdiag{\widehat{\cB}}.
\end{align*}
\end{proof}

\begin{fact}\label{fact:trans_commutes_w_Fourier}
The conjugate transpose commutes with $\bdiag{~\widehat{\cdot}~}$,
    \begin{equation*}
        \bdiag{\widehat{\cM^*}} = \bdiag{\widehat{\cM}}^*.
    \end{equation*}
    Additionally, if $\bcirc{\cM}$ is symmetric, $\bdiag{\widehat{\cM}}$ is also symmetric.
\end{fact}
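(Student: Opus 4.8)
The plan is to reduce the statement to the block-circulant identity~\eqref{eqn:fact_2_kilmer} and then push conjugate transposes through the Kronecker products. First, I would apply~\eqref{eqn:fact_2_kilmer} to the tensor $\cM^*\in\C^{\ell\times m\times n}$ — being careful that the identity-block dimensions swap, since $\cM^*$ has $\ell$ ``rows'' and $m$ ``columns'' — to obtain
\[
\bdiag{\widehat{\cM^*}} = \left(\mF_n\otimes \mI_\ell\right)\bcirc{\cM^*}\left(\mF_n^*\otimes \mI_m\right),
\]
and then invoke \Cref{fact:bcirc_transpose} to rewrite $\bcirc{\cM^*}$ as $\bcirc{\cM}^*$.

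Next, I would compute $\bdiag{\widehat{\cM}}^*$ directly from~\eqref{eqn:fact_2_kilmer}. Taking the conjugate transpose of the three-factor product $(\mF_n\otimes \mI_m)\bcirc{\cM}(\mF_n^*\otimes \mI_\ell)$ reverses the order of the factors and conjugate-transposes each one; using $(\mA\otimes\mB)^* = \mA^*\otimes\mB^*$ together with $(\mF_n^*)^* = \mF_n$ and $\mI^* = \mI$, the outer factors become $\mF_n\otimes\mI_\ell$ on the left and $\mF_n^*\otimes\mI_m$ on the right, while the middle factor becomes $\bcirc{\cM}^*$. This is precisely the expression produced for $\bdiag{\widehat{\cM^*}}$ in the first step, so the two agree and the stated identity follows.

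For the second assertion, note that symmetry of $\bcirc{\cM}$ forces $\cM$ to be square ($m=\ell$) and gives $\bcirc{\cM}^* = \bcirc{\cM}$. Substituting this into the formula just derived for $\bdiag{\widehat{\cM}}^*$ yields $\bdiag{\widehat{\cM}}^* = (\mF_n\otimes \mI_m)\bcirc{\cM}(\mF_n^*\otimes \mI_m) = \bdiag{\widehat{\cM}}$, so $\bdiag{\widehat{\cM}}$ is symmetric.

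I do not anticipate a genuine obstacle: the whole argument is a short chain of identities built from~\eqref{eqn:fact_2_kilmer} and \Cref{fact:bcirc_transpose}. The only place demanding care is the bookkeeping of the identity-block sizes $\mI_m$ versus $\mI_\ell$ — they interchange when passing from $\cM$ to $\cM^*$ — and applying the Kronecker-transpose rule with the three factors written in the reversed order.
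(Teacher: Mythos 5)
Your proposal is correct and follows essentially the same route as the paper: apply \eqref{eqn:fact_2_kilmer} to $\cM^*$, invoke \Cref{fact:bcirc_transpose}, and push the conjugate transpose through the Kronecker factors, with the same $\mI_m$/$\mI_\ell$ bookkeeping. Your treatment of the second assertion (substituting $\bcirc{\cM}^*=\bcirc{\cM}$ with $m=\ell$) is exactly the intended completion of the paper's abbreviated argument there.
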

\begin{proof}
Let $\cM \in \C^{m \times \ell \times n}$. Then
\begin{align*}
    \bdiag{\widehat{\cM^*}}
    &\overset{\eqref{eqn:fact_2_kilmer}}{=}\left(\mF_n \otimes \mI_{\ell} \right) \bcirc{\cM^*} \left(\mF_n^* \otimes \mI_{m} \right) \\
    &\overset{\Cref{fact:bcirc_transpose}}{=} \left(\mF_n \otimes \mI_{\ell}  \right) \bcirc{\cM}^* \left(\mF_n^* \otimes \mI_{m} \right)\\
    &= \left[\left(\mF_n \otimes \mI_{m} \right) \bcirc{\cM} \left(\mF_n^* \otimes \mI_{\ell} \right)\right]^*\\
    &\overset{\eqref{eqn:fact_2_kilmer}}{=}\bdiag{\widehat{\cM}}^*.
\end{align*}

To see that $\bdiag{\widehat{\cM}}$ is also symmetric when $\bcirc{\cM}$ is symmetric, 
note that
    \begin{align*}
        \bdiag{\widehat{\cM}}^* \overset{\eqref{eqn:fact_2_kilmer}}{=} \left[\left(\mF_n \otimes \mI_m \right) \bcirc{\cM} \left(\mF_n^* \otimes \mI_n \right)\right]^*.
    \end{align*}
\end{proof}

\begin{fact}\label{fact:Fourier_commutes_w_inverse}
The inverse commutes with $\bdiag{~\widehat{\cdot}~}$,
    \begin{equation*}
        \bdiag{\widehat{\cM^{-1}}}=\bdiag{\widehat{\cM}}^{-1}.
    \end{equation*}
\end{fact}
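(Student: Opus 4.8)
The plan is to reduce this to \Cref{fact:Fourier_distributes} together with the elementary computation $\bdiag{\widehat{\cI}} = \mI_{mn}$. Note that for $\cM^{-1}$ to exist, $\cM$ must be square, say $\cM\in\C^{m\times m\times n}$, so all the block-diagonal matrices appearing below are $mn\times mn$. First I would apply \Cref{fact:Fourier_distributes} to the pairs $(\cM,\cM^{-1})$ and $(\cM^{-1},\cM)$ and use $\cM\cM^{-1} = \cM^{-1}\cM = \cI$ to get
\begin{equation*}
    \bdiag{\widehat{\cI}} = \bdiag{\widehat{\cM}}\,\bdiag{\widehat{\cM^{-1}}} = \bdiag{\widehat{\cM^{-1}}}\,\bdiag{\widehat{\cM}}.
\end{equation*}

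Next I would evaluate $\bdiag{\widehat{\cI}}$ directly from \eqref{eqn:fact_2_kilmer}. By \Cref{defn:Identity}, the first frontal slice of $\cI$ is $\mI_m$ and all remaining frontal slices vanish, so $\bcirc{\cI}$ is block circulant with leading block $\mI_m$ and all other blocks zero; that is, $\bcirc{\cI} = \mI_{mn}$. Therefore
\begin{equation*}
    \bdiag{\widehat{\cI}} = \left(\mF_n\otimes\mI_m\right)\bcirc{\cI}\left(\mF_n^*\otimes\mI_m\right) = \left(\mF_n\mF_n^*\right)\otimes\mI_m = \mI_n\otimes\mI_m = \mI_{mn},
\end{equation*}
using the unitarity of the DFT matrix $\mF_n$ and the mixed-product property of the Kronecker product.

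Combining the two displays gives $\bdiag{\widehat{\cM}}\,\bdiag{\widehat{\cM^{-1}}} = \mI_{mn} = \bdiag{\widehat{\cM^{-1}}}\,\bdiag{\widehat{\cM}}$, which is precisely the assertion that $\bdiag{\widehat{\cM^{-1}}} = \bdiag{\widehat{\cM}}^{-1}$. There is no real obstacle here; the only step needing a moment's attention is recognizing that $\bcirc{\cI} = \mI_{mn}$, after which everything is routine bookkeeping with the Kronecker identities.
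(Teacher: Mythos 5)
Your proof is correct and follows essentially the same route as the paper: both arguments reduce to showing that $\bdiag{\widehat{\cM}}$ and $\bdiag{\widehat{\cM^{-1}}}$ multiply (in both orders) to $\mI_{mn}$, using $\bcirc{\cI} = \mI_{mn}$ and the unitarity of $\mF_n \otimes \mI_m$. The only cosmetic difference is that you invoke \Cref{fact:Fourier_distributes} as a black box, whereas the paper inlines the same Kronecker cancellation via \Cref{eqn:fact_2_kilmer} and \Cref{fact:bcirc_distr}.
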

\begin{proof}
    Let $\cM \in \C^{m\times m \times n}$. Note that $\bcirc{\cI_m} = \mI_{mn}$. Using \Cref{fact:bcirc_distr} and \Cref{eqn:fact_2_kilmer},
    \begin{align*}
        \bdiag{\widehat{\cM^{-1}}}&\bdiag{\widehat{\cM}}\\
        &\overset{\eqref{eqn:fact_2_kilmer}}{=}\left(\mF_n \otimes \mI_m \right) \bcirc{\cM^{-1}} \left(\mF_n^* \otimes \mI_m \right)
        \left(\mF_n \otimes \mI_m \right) \bcirc{\cM} \left(\mF_n^* \otimes \mI_m \right) \\
        &=\left(\mF_n \otimes \mI_m \right) \bcirc{\cM^{-1}} \bcirc{\cM} \left(\mF_n^* \otimes \mI_m \right)\\
        &\overset{\Cref{fact:bcirc_distr}}{=}\left(\mF_n \otimes \mI_m \right) \bcirc{\cI_m} \left(\mF_n^* \otimes \mI_m \right)\\
        &= \mI_{mn}.
    \end{align*}
    Analogously, one can show $\bdiag{\widehat{\cM}}\bdiag{\widehat{\cM^{-1}}}= \mI_{mn}$. 
\end{proof}

\subsection{A more interpretable convergence guarantee}\label{subsec:Fourier_equiv}

Using Fact 2 of \cite{kilmer2011factorization}, we can derive a more interpretable convergence guarantee in terms of the tensor $\cA$. Specifically, assuming that the indices $i_t$ are sampled uniformly at random at each iteration, we can restate \Cref{thm:conv} as follows.

\begin{theorem}\label{thm:Fourier_conv}
Let $\cX^*$ be such that $\cA \cX^* = \cB$ and $\cX^t$ be the $t\thup$ approximation of $\cX^*$ given by the updates of \Cref{eqn:trk_update} with initial iterate $\cX^0$ and indices $i_t\in[m-1]$ sampled uniformly at random at each iteration. The expected error at the $(t+1)^{st}$ iteration satisfies 
\begin{equation*}
    \E{\norm{\cX^{t+1}-\cX^*}_F^2\middle| \cX^{0}} \le \left(1-\min_{k\in [n-1]} \frac{\sigma^2_{\min}\left( \widehat{\cA}_k \right)}{m\norm{ \widehat{\cA}_k}_{\infty,2}^2}\right)^{t+1} \norm{\cX^{0}-\cX^*}_F^2,
\end{equation*}
where $\|\cdot \|_{\infty, 2}$ is as defined in \Cref{eqn:max_Fourier_norm}, $\widehat{\cA}_k$ is the $k^{th}$ frontal slice of $\widehat{\cA}$, and $\sigma_{min}(\cdot)$ denotes the smallest singular value.
\end{theorem}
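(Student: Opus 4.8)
The plan is to start from \Cref{thm:conv} and show that, under uniform sampling, the contraction coefficient satisfies
\[
    \sigma_{\min}\left(\E{\bcirc{\cP_i}}\right) \;\ge\; \min_{k\in[n-1]} \frac{\sigma^2_{\min}\left(\widehat{\cA}_k\right)}{m\,\norm{\widehat{\cA}_k}_{\infty,2}^2}.
\]
Since $\bcirc{\cP_i}$ is an orthogonal projection, $\E{\bcirc{\cP_i}}$ has eigenvalues in $[0,1]$, so both sides lie in $[0,1]$; the theorem then follows immediately from \Cref{thm:conv} because $(1-\rho)^{t+1}$ is nonincreasing in $\rho$ on $[0,1]$.

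The first step is to pass to the Fourier domain. The (unitary) DFT matrix $\mF_n$ makes $\mF_n\otimes\mI_\ell$ unitary, so conjugation by it preserves singular values; combining \Cref{eqn:fact_2_kilmer} with linearity of expectation gives
\[
    \sigma_{\min}\left(\E{\bcirc{\cP_i}}\right)
    = \sigma_{\min}\left(\left(\mF_n\otimes\mI_\ell\right)\E{\bcirc{\cP_i}}\left(\mF_n^*\otimes\mI_\ell\right)\right)
    = \sigma_{\min}\left(\E{\bdiag{\widehat{\cP_i}}}\right).
\]
Next I would identify the frontal blocks of $\bdiag{\widehat{\cP_i}}$. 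Applying \Cref{fact:Fourier_distributes}, \Cref{fact:trans_commutes_w_Fourier}, and \Cref{fact:Fourier_commutes_w_inverse} to $\cP_i = \cA_{i::}^*\left(\cA_{i::}\cA_{i::}^*\right)^{-1}\cA_{i::}$ yields $\bdiag{\widehat{\cP_i}} = \bdiag{\widehat{\cA_{i::}}}^*\left(\bdiag{\widehat{\cA_{i::}}}\,\bdiag{\widehat{\cA_{i::}}}^*\right)^{-1}\bdiag{\widehat{\cA_{i::}}}$. Because the mode-3 FFT acts tube-wise, the tube fibers of $\cA_{i::}$ are precisely those of $\cA$ with first index $i$, so $\widehat{\cA_{i::}}$ is the $i\thup$ row slice of $\widehat{\cA}$ and its $k\thup$ frontal slice equals the row $(\widehat{\cA}_k)_{i:}$. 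Since $\bdiag{\widehat{\cA_{i::}}}$ is then block diagonal with these $1\times\ell$ row blocks, the $k\thup$ frontal block of $\bdiag{\widehat{\cP_i}}$ is the rank-one orthogonal projection $(\widehat{\cA}_k)_{i:}^*(\widehat{\cA}_k)_{i:}\big/\norm{(\widehat{\cA}_k)_{i:}}_2^2$ onto the $i\thup$ row of $\widehat{\cA}_k$; this is well defined because \Cref{assump:invertibility_req} forces $\norm{(\widehat{\cA}_k)_{i:}}_2\neq 0$.

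With uniform sampling, the $k\thup$ frontal block of $\E{\bdiag{\widehat{\cP_i}}}$ is $\tfrac1m\sum_{i=0}^{m-1}(\widehat{\cA}_k)_{i:}^*(\widehat{\cA}_k)_{i:}\big/\norm{(\widehat{\cA}_k)_{i:}}_2^2 = \tfrac1m\left(\mD_k^{-1}\widehat{\cA}_k\right)^*\left(\mD_k^{-1}\widehat{\cA}_k\right)$, where $\mD_k$ is the diagonal matrix of row norms of $\widehat{\cA}_k$. The singular values of a block diagonal matrix are the union of those of its blocks, so $\sigma_{\min}\left(\E{\bdiag{\widehat{\cP_i}}}\right) = \min_{k} \tfrac1m\sigma^2_{\min}\!\left(\mD_k^{-1}\widehat{\cA}_k\right)$, and for any unit vector $v$,
\[
    \norm{\mD_k^{-1}\widehat{\cA}_k v}_2^2 = \sum_{i=0}^{m-1}\frac{|(\widehat{\cA}_k v)_i|^2}{\norm{(\widehat{\cA}_k)_{i:}}_2^2} \;\ge\; \frac{\norm{\widehat{\cA}_k v}_2^2}{\norm{\widehat{\cA}_k}_{\infty,2}^2} \;\ge\; \frac{\sigma^2_{\min}(\widehat{\cA}_k)}{\norm{\widehat{\cA}_k}_{\infty,2}^2}.
\]
Chaining these bounds and substituting into \Cref{thm:conv} gives the claim. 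The step I expect to be the main obstacle is the block-structure bookkeeping in the third paragraph — establishing that the $k\thup$ frontal slice of $\widehat{\cA_{i::}}$ equals $(\widehat{\cA}_k)_{i:}$ and checking that applying the projection formula block-by-block in the Fourier domain really produces the row-wise rank-one projections; once the block structure is in hand, the remaining estimates are the standard single-block Kaczmarz computation.
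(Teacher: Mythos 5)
Your proposal is correct and follows essentially the same route as the paper's proof: pass to the Fourier domain via \Cref{eqn:fact_2_kilmer}, use \Cref{fact:Fourier_distributes,fact:trans_commutes_w_Fourier,fact:Fourier_commutes_w_inverse} to identify the frontal blocks of $\widehat{\cP_i}$ as rank-one projections onto the rows $(\widehat{\cA}_k)_{i:}$, exploit the block-diagonal structure to reduce to a minimum over $k$, and bound each block by replacing the row norms with $\norm{\widehat{\cA}_k}_{\infty,2}^2$. The only (cosmetic) difference is that you package the argument as a direct lower bound on $\sigma_{\min}\left(\E{\bcirc{\cP_i}}\right)$ fed into \Cref{thm:conv}, using unitary invariance of singular values, whereas the paper re-derives the projected-error estimate of \Cref{eqn:innerprod} inside the error recursion; the underlying computation is the same.
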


\begin{proof}
    Let $\widehat{\cP_i}$ be the tensor formed by applying FFTs to each tube fiber of $\cP_i = \cA_{i::}^* \left(\cA_{i::}\cA_{i::}^*\right)^{-1}\cA_{i::}$. By \Cref{eqn:fact_2_kilmer}, we have that \begin{equation*}
        \bdiag{\widehat{\cP_i}} = 
        \left(\mF_{n} \otimes \mI_{\ell}\right){\bcirc{\cP_i}}\left(\mF_{n}^* \otimes \mI_{\ell}\right),
    \end{equation*} is a block diagonal matrix with blocks $\left(\widehat{\mP_i}\right)_k$, where $\left(\widehat{\mP_i}\right)_k$ is the $k\thup$ frontal slice of the tensor $\widehat{\cP_i}$. 
    We note that the projected error in \Cref{eqn:innerprod} can be rewritten as
    \begin{align*}
        &\E{\norm{\cP_i\cE^t}_F^2}
        = \sum _{j=1}^{p}\langle \E{\bcirc{\cP_i}}\unfold{\cE^t}_{:j},\unfold{\cE^t}_{:j}\rangle\\
        &= \sum _{j=1}^{p}\E{\langle \left(\mF_{n} \otimes \mI_{\ell}\right){\bcirc{\cP_i}}\left(\mF_{n}^* \otimes \mI_{\ell}\right)
        \left(\mF_{n} \otimes \mI_{\ell}\right)\unfold{\cE^t}_{:j},  \left(\mF_{n} \otimes \mI_{\ell}\right)\unfold{\cE^t}_{:j}\rangle}\\
        &= \sum _{j=1}^{p}\E{\langle \bdiag{\widehat{\cP_i}}
        \left(\mF_{n} \otimes \mI_{\ell}\right)\unfold{\cE^t}_{:j},  \left(\mF_{n} \otimes \mI_{\ell}\right)\unfold{\cE^t}_{:j}\rangle}.
    \end{align*}
    
    Now, since $\E{\bdiag{\widehat{\cP_i}}}$ is symmetric by \Cref{fact:trans_commutes_w_Fourier}, 
    
    \begin{equation}\label{eqn:unsimp_conv_bound}
        \E{\norm{\cP_i\cE^t}_F^2}
         \ge \sigma_{\min}\left(\E{\bdiag{\widehat{\cP_i}}}\right)  \norm{\left(\mF_{n} \otimes \mI_{\ell}\right)\unfold{\cE^t}}_F^2.
    \end{equation}
    
    Note that,
    \begin{align*}
        \norm{\left(\mF_{n} \otimes \mI_{\ell}\right)\unfold{\cE^t}}_F^2
        & = \sum _{j=1}^{p}
        \langle \left(\mF_{n} \otimes \mI_{\ell}\right)\unfold{\cE^t}_{:j},\left(\mF_{n} \otimes \mI_{\ell}\right)\unfold{\cE^t}_{:j}\rangle\\
        & = \sum _{j=1}^{p}
        \langle \unfold{\cE^t}_{:j},\unfold{\cE^t}_{:j}\rangle\\
        & = \norm{\unfold{\cE^t}}_F^2\\
        & = \norm{\cE^t}_F^2.
    \end{align*}

Since $\bdiag{\widehat{\cP_i}}$ is block diagonal, 
\begin{equation*}
    \sigma_{\min}\left(\E{\bdiag{\widehat{\cP_i}}}\right) 
    =
    \min_{k \in [n-1]}  \sigma_{\min}\left(\E{\left(\widehat{\mP_i}\right)_k}\right). 
\end{equation*}

Factoring $\bdiag{\widehat{\cP_i}}$, 
\begin{align*}
     \bdiag{\widehat{\cP_i}} \overset{\Cref{fact:Fourier_distributes}}{=} \bdiag{\widehat{{\cA}_{i::}^*}}\bdiag{\reallywidehat{\left({\cA}_{i::}{\cA}_{i::}^*\right)^{-1}}}\bdiag{\widehat{{\cA}_{i::}}}\\ 
     \overset{\Crefrange{fact:trans_commutes_w_Fourier}{fact:Fourier_commutes_w_inverse}}{=} \bdiag{\widehat{{\cA}_{i::}}}^*\bdiag{\reallywidehat{{\cA}_{i::}{\cA}_{i::}^*}}^{-1}\bdiag{\widehat{{\cA}_{i::}}}\\ 
     \overset{\Cref{fact:Fourier_distributes}}{=} \bdiag{\widehat{{\cA}_{i::}}}^*\left[\bdiag{\widehat{{\cA}_{i::}}}\bdiag{\widehat{{\cA}_{i::}^*}}\right]^{-1}\bdiag{\widehat{{\cA}_{i::}}}.
\end{align*}

Noting that $\bdiag{\widehat{{\cA}_{i::}}}\bdiag{\widehat{{\cA}_{i::}^*}}$ is a diagonal matrix, one can see that $\left(\widehat{\mP_i}\right)_k$ is the projection onto $\left(\widehat{{\cA}_{i::}}\right)_k$ by rewriting the $k\thup$ frontal face of $\widehat{\cP_i}$ as 
\begin{equation*}
    \left(\widehat{\mP_i}\right)_k = \frac{\left(\widehat{{\cA}_{i::}}\right)^*_k\left(\widehat{ {\cA}_{i::}}\right)_k}{\left(\widehat{{\cA}_{i::}}\widehat{{\cA}_{i::}^*}\right)
    _k}.
\end{equation*}

We can thus rewrite \Cref{eqn:unsimp_conv_bound} as 
\begin{equation}\label{eqn:conv_bound_w_min}
    \E{\norm{\cP_i\cE^t}_F^2}
     \ge \min_{k\in[n-1]} \sigma_{\min}\left(\E{\frac{\left(\widehat{{\cA}_{i::}}\right)^*_k\left(\widehat{ {\cA}_{i::}}\right)_k}{\left(\widehat{{\cA}_{i::}}\widehat{{\cA}_{i::}^*}\right)_k}}\right) \norm{\cE^t}_F^2.
\end{equation}

The expectation of \Cref{eqn:unsimp_conv_bound} can now be calculated explicitly. 
For simplicity, we assume that the row indices $i$ are sampled uniformly. As in MRK extensions and literature, many other sampling distributions could be used. 

To derive a lower bound for the smallest singular value in \Cref{eqn:conv_bound_w_min}, define
\begin{equation}\label{eqn:max_Fourier_norm}
    \norm{\widehat{\cA}_k}_{\infty,2}^2 := \max_{i}\left[\left(\widehat{{\cA}_{i::}}\widehat{{\cA}_{i::}^*}\right)_k\right].
\end{equation} 
The values $\left(\widehat{{\cA}_{i::}}\widehat{{\cA}_{i::}^*}\right)_k$ are necessarily positive for all $k \in [n-1]$  under \Cref{assump:invertibility_req} as 
\begin{align*}
    \left(\widehat{{\cA}_{i::}}\widehat{{\cA}_{i::}^*}\right)_k
    &=
    \bdiag{\widehat{\cA_{i::}}\widehat{\cA_{i::}^*}}_{kk} \\
    &= \bdiag{\widehat{\cA_{i::}}}_k \bdiag{\widehat{\cA_{i::}^*}}_k\\
    &= \left(\mF_n\right)_{k:} \bcirc{\cA_{i::}}\bcirc{\cA_{i::}^*} \left(\mF_n\right)_{k:}^* \\
    &= \left(\mF_n\right)_{k:} \bcirc{\cA_{i::}}\bcirc{\cA_{i::}}^* \left(\mF_n\right)_{k:}^*\\
    &= \norm{\bcirc{\cA_{i::}}^* \left(\mF_n\right)_{k:}^*}_2^2.
\end{align*}

Now, it can be easily verified that
\begin{align*}
    \sigma_{\min}\left(\E{\frac{\left(\widehat{{\cA}_{i::}}\right)_k^*\left(\widehat{ {\cA}_{i::}}\right)_k}{\left(\widehat{{\cA}_{i::}}\widehat{{\cA}_{i::}^*}\right)_k}}\right)
    & \ge \sigma_{\min}\left( \frac{1}{m} \sum_{i=0}^{m-1} \frac{\left(\widehat{{\cA}_{i::}}\right)_k^*\left(\widehat{ {\cA}_{i::}}\right)_k}{\norm{\widehat{\cA}_k}_{\infty,2}^2}\right)\\
    & = \frac{\sigma^2_{\min}\left( \widehat{\cA}_k \right)}{m\norm{\widehat{\cA}_k}_{\infty,2}^2} .
\end{align*}

The projected error of \Cref{eqn:conv_bound_w_min} then becomes
\begin{equation}\label{eqn:closed_form_exp}
    \E{\norm{\cP_i\cE^t}_F^2}
     \ge \min_{k\in [n-1]} \frac{\sigma^2_{\min}\left( \widehat{\cA}_k \right)}{m\norm{ \widehat{\cA}_k}_{\infty,2}^2} \norm{\cE^t}_F^2,
\end{equation}
leading to a contraction coefficient of
\begin{equation}\label{eqn:rho_Fourier}
    \rho = 1 - \min_{k\in [n-1]} \frac{\sigma^2_{\min}\left( \widehat{\cA}_k \right)}{m\norm{ \widehat{\cA}_k}_{\infty,2}^2}.
\end{equation}
We can thus rewrite the guarantee in \Cref{thm:conv} for uniform random sampling of the row indices $i$ as 
\begin{equation}\label{eqn:Fourier_conv}
    \E{\norm{\cX^{t+1}-\cX^*}_F^2\middle| \cX^{0}} \le \left(1-\min_{k\in[n-1]} \frac{\sigma^2_{\min}\left( \widehat{\cA}_k \right)}{m\norm{ \widehat{\cA}_k}_{\infty,2}^2}\right)^{t+1} \norm{\cX^{0}-\cX^*}_F^2.
\end{equation}
\end{proof}

\subsection{Equivalence of TRK and block MRK applied in the Fourier domain}\label{subsec:block_equiv}
In this section, we observe a connection between the proposed TRK method and the previously studied block MRK algorithm~\cite{needell2014paved}. This analysis helps to further bridge the understanding of connections between TRK and MRK. 
In block MRK, one projects the current iterate onto the solution space of a set of constraints (set of rows of the linear system) as opposed to the solution space with respect to a single row. In practice, block MRK can lead to a significant speed up over MRK \cite{needell2014paved}. 

Here we show the equivalence of TRK and block MRK performed in the Fourier domain with specific block partitions and remark on the convergence rate implications in the block MRK setting. Using \Cref{eqn:fact_2_kilmer}, the tensor linear system \Cref{eqn:linsys} can be rewritten as: 
\begin{equation}
    \begin{pmatrix}
        \widehat{\cA}_0 &&& \\
        & \widehat{\cA}_1 &&& \\
         && \ddots & \\
        &&& \widehat{\cA}_{n-1}
    \end{pmatrix}
    \begin{pmatrix}
        \widehat{\cX}_0 \\
        \widehat{\cX}_1  \\
         \vdots  \\
        \widehat{\cX}_{n-1} 
    \end{pmatrix}
    =
        \begin{pmatrix}
        \widehat{\cB}_0 \\
        \widehat{\cB}_1  \\
         \vdots  \\
        \widehat{\cB}_{n-1} 
    \end{pmatrix}.
    \label{eqn:Fouriersys}
\end{equation}

The system shown in \Cref{eqn:Fouriersys} can be solved using block MRK such that the resulting iterate is equivalent to the TRK iterate in the following way. Let 
\begin{equation}
    \label{eqn:tau}
    \tau_{i} = \{ km + i~|~ k\in [n-1] \},
\end{equation} 
denote in set of indices corresponding to a randomly selected block of the measurement matrix in~\Cref{eqn:Fouriersys}. This choice of $\tau_{i}$ corresponds to selecting the $i\thup$ row of each $\widehat{\cA}_k$ in $\bdiag{\widehat{\cA}}$, i.e., each row of $\widehat{\cA}_{i::}$ appears along the diagonal of $\bdiag{\widehat{\cA}}_{\tau_{i}}$ 
and therefore, $\bdiag{\widehat{\cA}}_{\tau_{i}} = \bdiag{\widehat{\cA}_{i::}}$.

For a randomly selected row index $i_t \in [m-1]$, the block MRK update for \Cref{eqn:Fouriersys} is aptly written as:
    \begin{align}
    \unfold{\widehat{\cX}^{t+1}} &=  \unfold{\widehat{\cX}^t} - \bdiag{\widehat{\cA}}_{\tau_{i_t}}^\dagger\left(\bdiag{\widehat{\cA}}_{\tau_{i_t}}  \unfold{\widehat{\cX}^t}- \unfold{\widehat{\cB}}_{\tau_{i_t}}\right)  \label{eqn:trk_brk_iterate} \\
    &=  \unfold{\widehat{\cX}^t} - \bdiag{\widehat{\cA}_{i_t::}}^\dagger\left(\bdiag{\widehat{\cA}_{i_t::}} \unfold{\widehat{\cX}^t}- \unfold{\widehat{\cB}}_{\tau_{i_t}}\right).  \nonumber 
    \end{align}
Using \Cref{eqn:fact_2_kilmer} and \Cref{fact:Fourier_distributes,fact:Fourier_distributes_add,fact:Fourier_commutes_w_inverse,fact:trans_commutes_w_Fourier}, we can show
\begin{align*}
    \bdiag{\widehat{\cA}_{i_t::}}^\dagger &= \bdiag{\widehat{\cA}_{i_t::}}^*\left(\bdiag{\widehat{\cA}_{i_t::}} \bdiag{\widehat{\cA}_{i_t::}}^* \right)^{-1} \\ 
    &= \bdiag{{\widehat{\cA}_{i_t::}^*} \left({\widehat{\cA}_{i_t::}}{\widehat{\cA}_{i_t::}^*}\right)^{-1}}
\end{align*}
Therefore, noting the following equalities and folding the right and left sides of the equation into tensors, we derive the iterate update for $\widehat{\cX}^{t+1}$ from the block MRK update: 
\begin{align}
    \unfold{\widehat{\cX}^{t+1}} &=  \unfold{\widehat{\cX}^t} - \bdiag{{\widehat{\cA^*}_{i::}} \left({\widehat{\cA}_{i_t::}}{\widehat{\cA^*}_{i_t::}} \right)^{-1}}\left(\bdiag{\widehat{\cA}_{i_t::}} \unfold{\widehat{\cX}^t}- \unfold{\widehat{\cB}}_{\tau_{i_t}}\right)\nonumber  \\ 
    &=  \unfold{\widehat{\cX}^t} - (\mF_n \otimes \mI_\ell) \unfold{{{\cA}_{i_t::}}^* \left({{\cA}_{i_t::}}{{\cA}_{i_t::}}^* \right)^{-1}\left( {\cA}_{i_t::} {\cX}^t - {\cB}_{i_t::} \right)}  \nonumber \\
    &=  \unfold{\widehat{\cX}^t} - \unfold{{\widehat{\cA^*}_{i_t::}} \left({\widehat{\cA}_{i_t::}}{\widehat{\cA^*}_{i_t::}} \right)^{-1}\left( \widehat{\cA}_{i_t::} \widehat{\cX}^t - \widehat{\cB}_{i_t::} \right)}  \nonumber
\end{align}
    \begin{equation}
    \Rightarrow \widehat{\cX}^{t+1} =  \widehat{\cX}^t - \widehat{\cA^*}_{i_t::} \left(\widehat{\cA}_{i_t::}\widehat{\cA^*}_{i_t::}\right)^{-1}\left(\widehat{\cA}_{\tau_{i_t::}} \widehat{\cX}^t-\widehat{\cB}_{i_t::}\right). 
    \label{eqn:trk_update_fft}
\end{equation}

Since the FFT is applied to each tube fiber of $\cA$ independently, $\widehat{\cA_{i::}} = \widehat{\cA}_{i::}$. 
To see that \Cref{eqn:trk_update_fft} is equivalent to \Cref{eqn:trk_update} one can use \Cref{fact:Fourier_distributes,fact:Fourier_distributes_add,fact:Fourier_commutes_w_inverse,fact:trans_commutes_w_Fourier} to show that $\widehat{\cX}^{t+1} = \widehat{\cX^{t+1}}$, that is taking the inverse FFT on the tubes of $\widehat{\cX^{t+1}}$ will return the TRK update \Cref{eqn:trk_update}.

\begin{remark} The contraction rate for block MRK applied to the linear system ~\Cref{eqn:Fouriersys} with iterates as shown in \Cref{eqn:trk_brk_iterate} is 
\begin{equation}
 \rho_{_{\text{BRK}}} = 1 - \frac{ \sigma^2_{\min}\left(\bdiag{\widehat{\cA}}\right)}{mn \max_{i} \lambda_{\max} \left(\bdiag{\widehat{\cA}}_{\tau_{i}}\bdiag{\widehat{\cA}}^*_{\tau_{i}}\right)}.
 \label{eqn:rho_brk}
\end{equation}
The contraction coefficient $\rho_{_{\text{BRK}}}$ is a direct result of the theoretical guarantees for block MRK shown in \cite{needell2014paved}.
Note that due to the block-diagonal structure, the numerator of the second term of \Cref{eqn:rho_brk} can be simplified to
\begin{align*}
    \sigma^2_{\min}\left(\bdiag{\widehat{\cA}}\right) &=  \min_{ k \in [n-1]} \sigma^2_{\min} (\widehat{\cA}_k).
\end{align*}
Using the fact that $\bdiag{\widehat{\cA}}_{\tau_{i}} = \bdiag{\widehat{\cA}_{{i}::}}$ along with \Cref{fact:Fourier_distributes,fact:trans_commutes_w_Fourier}, it can be easily shown that $\bdiag{\widehat{\cA}}_{\tau_{i}}\bdiag{\widehat{\cA}}^*_{\tau_{i}} = \bdiag{\widehat{\cA}_{{i}::}\widehat{\cA^*}_{{i}::}}$. Thus, the denominator of \Cref{eqn:rho_brk} can be simplified to: 
\begin{align*}
    \max_{i} \lambda_{\max} \left(\bdiag{\widehat{\cA}}_{\tau_{i}}\bdiag{\widehat{\cA}}^*_{\tau_{i}}\right) &=  \max_{i} \lambda_{\max} \left(\bdiag{\widehat{\cA}_{{i}::}\widehat{\cA}_{{i}::}^*} \right) \\
    &= \max_{i} \max_k \left[ \widehat{\cA}_{{i}::}\widehat{\cA}_{{i}::}^* \right]_k \\
    \overset{\eqref{eqn:max_Fourier_norm}}{=} \max_k \norm{\widehat{\cA}_k}_{\infty,2}^2,
\end{align*}
where the norm in the last equality is as defined in~\Cref{eqn:max_Fourier_norm}.
Putting this all together, the contraction rate for block MRK applied to~\Cref{eqn:Fouriersys} is 
\begin{equation}
 \rho_{_{\text{BRK}}} = 1 - \frac{\min_k \sigma^2_{\min} (\widehat{\cA}_k)}{mn \max_{k} \norm{\widehat{\cA}_k}_{\infty,2}^2}.
    \label{eqn:blockRKRate}
    \end{equation}

Compared to the convergence rate derived for TRK in~\Cref{thm:Fourier_conv}, the standard block MRK convergence guarantee is weaker (slower). The standard analysis for the convergence of block MRK is not restricted to block diagonal systems. Thus, although block MRK applied to \Cref{eqn:Fouriersys} with predetermined blocks $\tau_{i}$ is equivalent to the proposed TRK update, the standard block MRK guarantee is weaker since the TRK analysis takes advantage of the block diagonal structure of the system in the Fourier domain. 
\end{remark}

\begin{remark}
The block-diagonal system in \Cref{eqn:Fouriersys} is highly parallelizable. Specifically, each component block of the system $\widehat{\cA}_k\widehat{\cX}_k=\widehat{\cB}_k$ for ${k\in[n-1]}$ can be solved independently. For $m$ extremely large, however, loading a single $\widehat{\cA}_i$ into memory maybe be impossible. In such settings, a randomized iterative method such as TRK is advantageous. The block-diagonal structure of the subsampled system in the Fourier domain also allows the update for each component block to be computed in parallel.
\end{remark}

Making use of the equivalence of TRK and block MRK in the Fourier domain, TRK can be implemented efficiently using methods for matrices as detailed in \Cref{algo:fourier_TRK}. Note that \Cref{eqn:trk_brk_iterate} can be reformulated as 
\begin{equation*}
    \widehat{\cX^{t+1}}_k =  \widehat{\cX^t}_k - \left(\widehat{\cA}_{i_t:k}\right)^\dagger\left(\widehat{\cA}_{i_t:k}  \widehat{\cX}^t_k- \widehat{\cB}_{i_t:k}\right) \mbox{ for } k\in [n-1],
\end{equation*}
by making use of the block structure.

\begin{algorithm}[t]
\begin{algorithmic}
\State \textbf{Input:}  $\cX^0\in \C^{\ell\times p\times n},$ $\cA\in \C^{m\times \ell \times n}$, $\cB\in\C^{m\times p \times n}$,  and probabilities $p_0, \dots, p_{m-1}$ corresponding to each row slice of $\cA$
\State Compute $\widehat{\cX^0}, \widehat{\cA}, \widehat{\cB}$ as in \Cref{eqn:fact_2_kilmer}
\For {$t = 0, 1, 2, \dots$}
    \State  Sample $i_t \sim p_i$
    \For {$k = 0, 1, \dots, n-1$}
    	\State  $ \widehat{\cX^{t+1}}_k =  \widehat{\cX^t}_k - \left(\widehat{\cA}_{i_t:k}\right)^\dagger\left(\widehat{\cA}_{i_t:k}  \widehat{\cX}^t_k- \widehat{\cB}_{i_t:k}\right)$
	\EndFor
\EndFor
\State Recover $\cX^{t+1}$ from $\widehat{\cX^{t+1}}$
\State \textbf{Output:} last iterate $\cX^{t+1}$
\end{algorithmic}

\caption{Tensor RK computed in the Fourier domain}
\label{algo:fourier_TRK}
\end{algorithm}

\begin{remark} 
The equivalence between TRK and block MRK with blocks indexed by \Cref{eqn:tau} also reveal a straightforward analysis for the comparison of the computational complexity between TRK and MRK. The per iteration complexity of MRK using rows $ \textbf{A}_{i:} \in \mathbb{R}^{1 \times \ell n} $ is $\mathcal{O}(\ell n)$ and the per iteration complexity of TRK using rows $\cA_{i::} \in \mathbb{R}^{1 \times \ell \times n}$ is $\mathcal{O}(\ell n^2)$. 
\end{remark}

\section{Experiments}

In this section, we present numerical experiments comparing MRK and TRK. The implementation of the TRK algorithm used is as outlined in~\Cref{algo:TRK}, unless otherwise noted. First, we show empirically that with an increasing number of measurements $m$, the contraction coefficient for TRK is smaller than that of MRK indicating a stronger convergence guarantee. Next, we compare the performance of TRK with that of MRK applied to a matrix linear system where the memory complexity of the measurement matrix is preserved. Then, we move on to the setting in which one is given tensor measurements $\cB$ and compare the performance of TRK with that of MRK applied to the unfolded tensor system 
\begin{equation*}
    \bcirc{\cA} \unfold{\cX} = \unfold{\cB}.
\end{equation*}
These experiments demonstrate the computational benefits of using TRK given by \Cref{eqn:trk_update} over applying standard MRK to an unfolded system.

\subsection{Contraction coefficients of TRK and MRK}
In this experiment, the contraction coefficient of the proposed TRK is compared to that of MRK. In order to apply the standard MRK method to recover the three-dimensional signal $\cX$, we unfold the tensor $\cX$ into the matrix $\unfold{\cX}\in \C^{\ell n \times p}$ and collect measurements $\mB\in\C^{\mu\times p}$ of the signal $\cX$ via the measurement matrix $\mA\in \C^{\mu \times n\ell}$, resulting in the matrix linear system
\begin{equation}\label{eqn:mat_system}
    \mA \unfold{\cX} = \mB.
\end{equation}
After each iteration of MRK applied to \Cref{eqn:mat_system}, the iterate $\unfold{\cX^{t+1}}$ satisfies
\begin{equation}\label{eqn:mat_iter_constr}
    \mA_{i_t:} \unfold{\cX^{t+1}} = \unfold{\cB}_{i_t:}.
\end{equation}
Thus, the constraint is applied to each column of $\unfold{\cX}$ or equivalently each column slice of $\cX$ independently. 
Note that the measurement matrix $\mA$ will have the same number of elements as the measurement tensor $\cA$ in \Cref{eqn:linsys} if $\mu = m$. 

Assuming that the rows of $\mA$ are normalized, MRK applied to matrix linear systems has a contraction coefficient of 
\begin{equation}\label{eqn:mat_contr}
    1 - \sigma_{min}^2(\mA)/m.
\end{equation}
For TRK, the contraction coefficient from~\Cref{thm:Fourier_conv} is 
\begin{equation*}
 1 - \min_{k \in [n-1]} \frac{\sigma^2_{\min}\left( \widehat{\cA}_k \right)}{m\norm{ \widehat{\cA}_k}_{\infty,2}^2}.
\end{equation*}
In this experiment, row slices $\cA_{i_::}$ have unit Frobenius norm and indices $i\in [m-1]$ are selected uniformly at random at each iteration. 

The measurement matrix $\mA\in \C^{m \times \ell n}$ and measurement tensor $\cA\in \C^{m \times \ell \times n}$ are generated as follows. The entries of $\mA \in \C^{m \times \ell n}$ are drawn i.i.d.\ from a standard Gaussian distribution and then each row is normalized to have unit norm. The entries of $\cA \in \mathbb{R}^{m \times \ell \times n}$ are also drawn i.i.d.\ from a standard Gaussian distribution but row slices $\cA_{i::}$ (as opposed to matrix rows) of $\cA$ are normalized to have unit Frobenius norm. Note that both the tensor $\cA$ and matrix $\mA$ in this experiment have the same memory complexity of $\mathcal{O}(m \ell n)$. The contraction coefficients, computed via \Cref{eqn:mat_contr} for matrices $\mA$ and \Cref{eqn:rho_Fourier} for tensors $\cA$, with a varying number of measurements $m$ are presented in Figure~\ref{fig:contraction}. Here, the dimensions $\ell = 20$ and $n = 10$ are fixed. For each number of measurements $m$, the contraction coefficients are averaged over 50 random realizations of the measurement tensor or matrix.

In this experiment, the contraction coefficients for MRK and TRK differ, with TRK being smaller (i.e., faster convergence) for larger $m$. Thus, in the large-scale setting where $m \gg \ell n$, TRK is expected to converge faster than MRK, as we will see in the experiments of \Cref{subsec:empirical_perf}. 
When a small number of measurements $m$ are used, MRK has a smaller contraction coefficient than TRK, however, we are primarily concerned with the setting in which $m \gg \ell n$ as this is the typical use case for Kaczmarz methods.

\begin{figure}[h!]
    \centering
    \includegraphics[width=.45\textwidth]{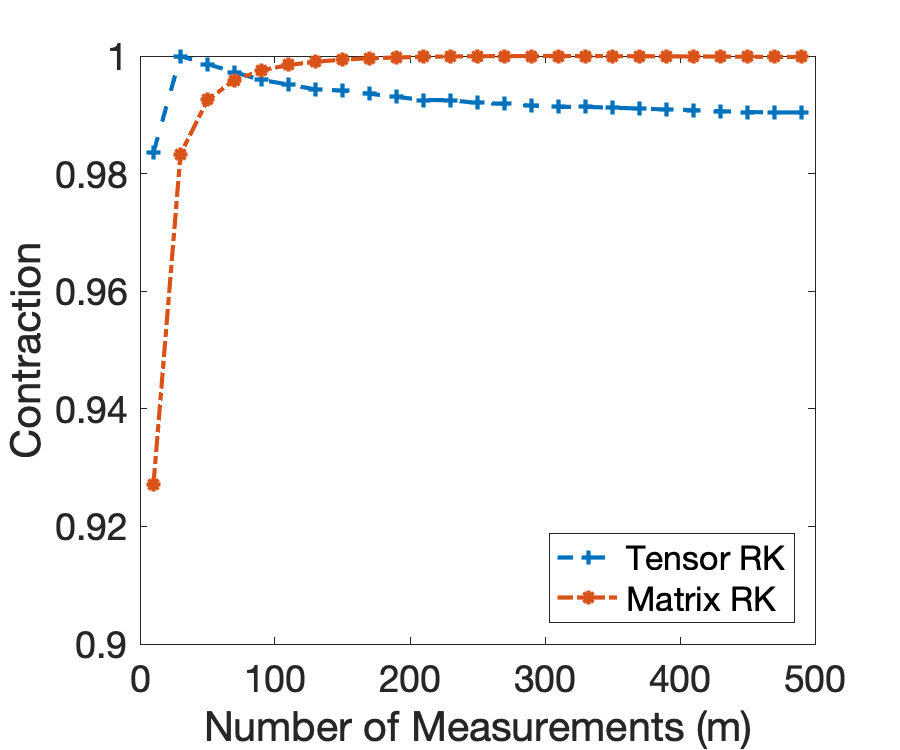}
    \caption{Comparison between contraction coefficients of MRK (\Cref{eqn:mat_contr}) applied to a matrix linear system and TRK (\Cref{eqn:rho_Fourier}) applied to a tensor system.}
    \label{fig:contraction}
\end{figure}

\subsection{Empirical performance of TRK and MRK}\label{subsec:empirical_perf} 
We now compare the empirical performance of MRK and TRK on linear systems  $\cA\cX = \cB$ and $\mA  \mX = \mY $. Similar to the previous experiment, the dimensions of $\cA$ and $\mA$ are selected to require a similar measurement complexity while solving for unknown signals of comparable dimensions. More specifically, for the tensor system we have $\cA \in \C^{m \times \ell \times n}$ and $\cX \in \C^{\ell \times p \times n}$, while for the matrix system, we have $\mA \in \C^{m \times \ell n}$ and $\mX \in \C^{\ell n \times p}$. The entries of $\cA$ and $\mA$ are initialized with i.i.d.\ standard Gaussian entries then normalized to have unit row slice and matrix row Frobenius norm respectively. The entries of the signals $\cX$ and $\mX$ are drawn i.i.d.\ from a standard Gaussian distribution and the empirical results presented here are averaged over 20 random runs of TRK and MRK. For TRK, we use the implementation outlined in Algorithm~\ref{algo:fourier_TRK}.

\Cref{fig:exp_fixedmemory_500} compares the empirical performances of the two algorithms for an over-determined system with $m = 500$, $\ell = 20$, $n = 10$, and $p = 10$. We refer to a tensor linear system as over-determined if the Fourier transformed systems of~\Cref{eqn:Fouriersys} is over-determined, i.e., if $m \geq \ell$. In the over-determined setting, we plot the convergence of the algorithms with respect to iterations (left plot) as well as CPU time (right plot). We observe that in both settings, TRK outperforms MRK in terms of iterations and CPU times. While, visually, MRK does not seem to be making progress towards the solution in either setting, it is in fact converging slowly. This should not be surprising given the equivalence between TRK and block MRK. In particular, one can think of TRK as block MRK acting on $n$ rows at a time (whereas MRK only works on on row at a time). 

\begin{figure}[h!]
    \centering
    \includegraphics[width=.45\textwidth]{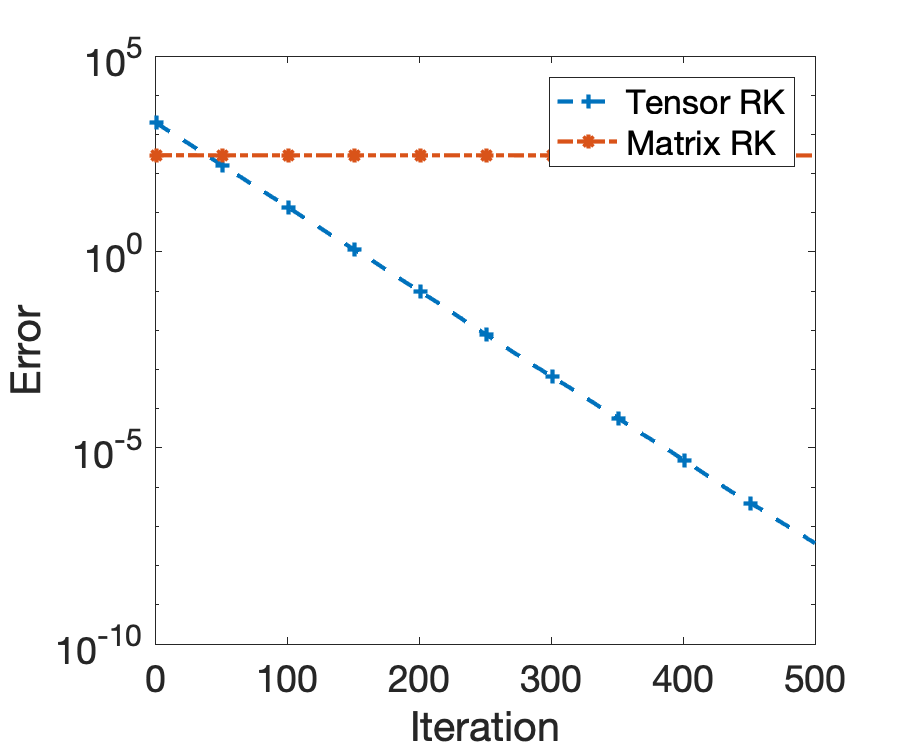}
    \includegraphics[width=.45\textwidth]{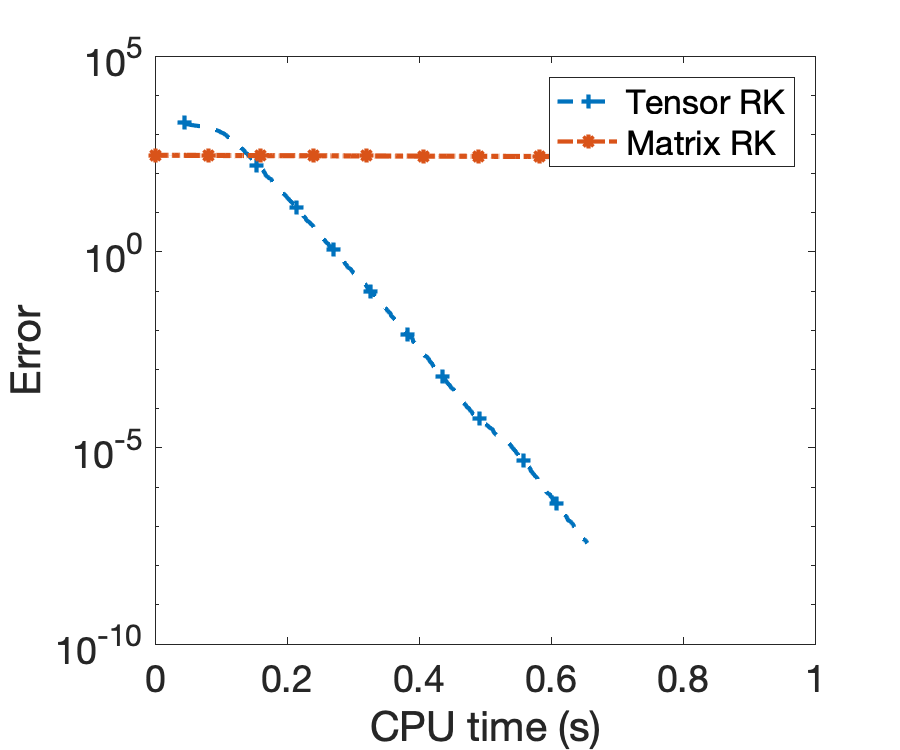}
    \caption{Comparison MRK and TRK when the measurement matrix (or tensor) has a fixed memory budget of $\mathcal{O}(m \ell n)$ bits when $m = 500$, $\ell = 20$, and $n = 10$.} 
    \label{fig:exp_fixedmemory_500}
\end{figure}

We additionally consider the setting in which one is immediately provided the measurement tensor $\cA\in \C^{m \times \ell \times n}$ and corresponding measurements $\cB \in \C^{m \times p \times n}$ and can choose between performing signal recovery using TRK or by unfolding the tensor system and solving $\bcirc{\cA} \unfold{\cX} = \unfold{\cB}$ using MRK.

The tensor $\cA$ is initialized with i.i.d.\ standard Gaussian entries and the measurement matrix $\mA$ is taken to be $\mA = \bcirc{\cA}$. Here, $m = 100$, $\ell = 15$, $ n = 10$, and $p = 30$. Figure~\ref{fig:exp1_nonoise} plots the resulting empirical performance averaged over 20 random runs of TRK and MRK when choosing between signal recovery using TRK or MRK for a given tensor measurement system. We again see that TRK converges at a much faster rate than MRK in this setting.

\begin{figure}
    \centering
    \includegraphics[width=.45\textwidth]{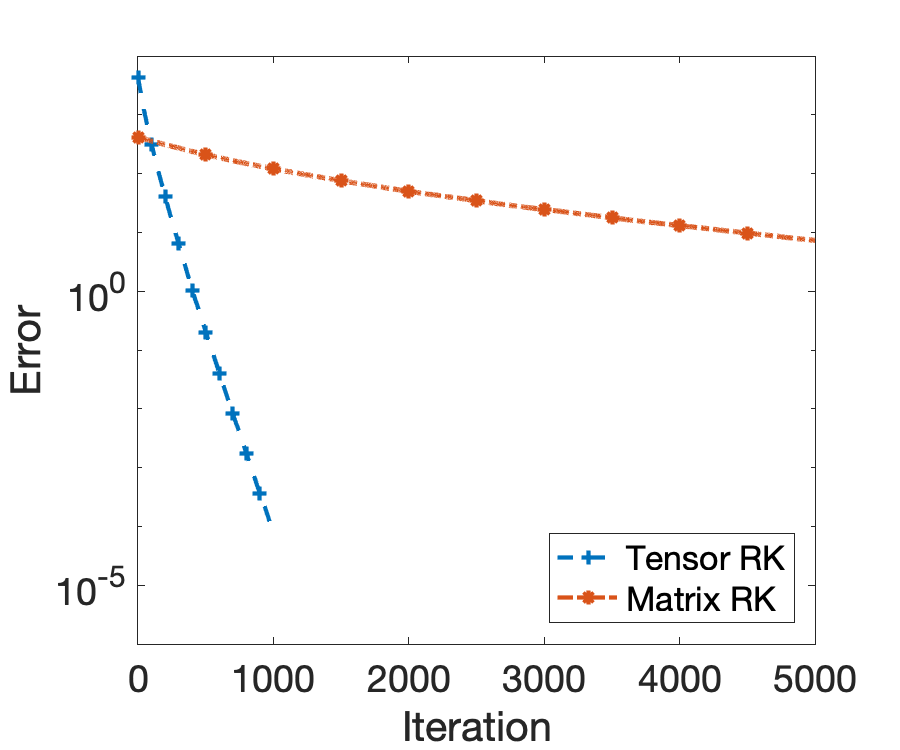}
    \includegraphics[width=.45\textwidth]{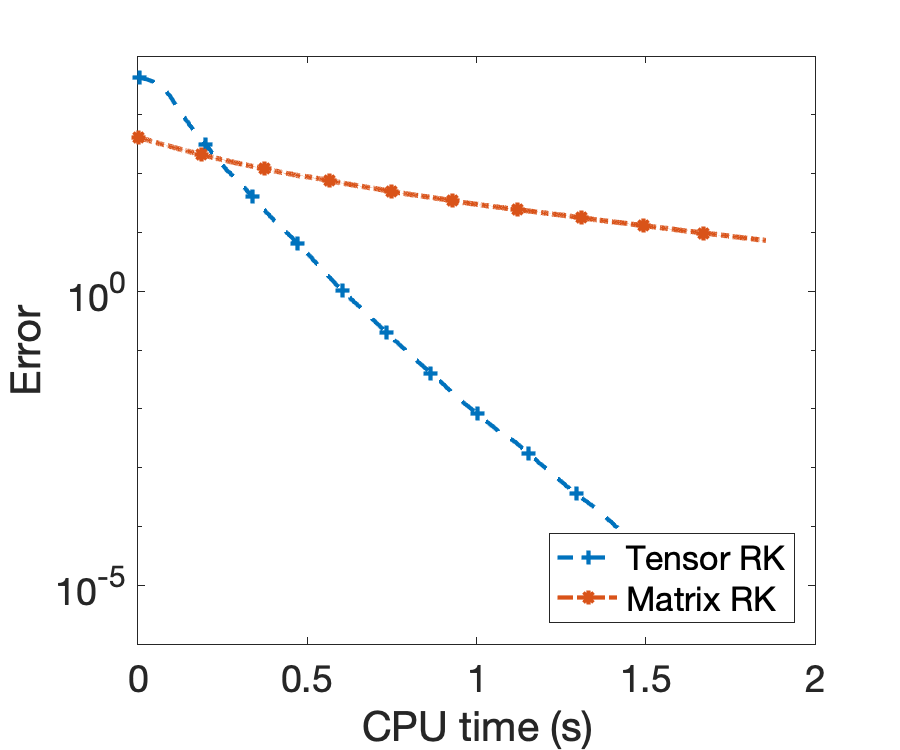}
    \caption{Performance comparison of MRK on matricized linear system and TRK on tensor linear system.}
    \label{fig:exp1_nonoise}
\end{figure}

\subsection{Empirical performance of TRK and block MRK}
To support the theoretical guarantees and remarks regarding the equivalence of TRK and block MRK, experimental results comparing the empirical performance of the two algorithms are presented in this section. In \Cref{fig:exp_trk_brk}, TRK and block MRK are used to solve a tensor linear system as shown in~\Cref{eqn:linsys}. TRK solves the tensor system via the update  in~\Cref{eqn:trk_update} while block MRK is performed on the transformed system in the Fourier domain given in~\Cref{eqn:Fouriersys} with predetermined blocks $\tau_{i} = \{ km + i~|~k\in[n-1] \}$. The measurement tensor $\cA \in \mathbb{R}^{100 \times 30 \times 5}$  and signal tensor $\cX \in \mathbb{R}^{30 \times 15 \times 5}$ contains i.i.d.\ standard Gaussian entries. All approximation errors are averaged over 20 runs of the respective algorithm. The theoretical upper bounds, titled in the legend with `UB', are computed using \Cref{eqn:Fourier_conv} for TRK and \Cref{eqn:blockRKRate} for block MRK. \Cref{fig:exp_trk_brk} clearly shows that TRK and block MRK perform similarly across iterations as expected since the two methods are shown to be equivalent in \Cref{subsec:block_equiv}. As remarked, the TRK upper bound shown in Theorem~\ref{thm:Fourier_conv} has a slight advantage over the general block MRK convergence guarantees as these do not make use of the block diagonal structure of~\Cref{eqn:Fouriersys}. Experiments comparing CPU times for TRK and block MRK are omitted, as the two methods are equivalent as shown in~\Cref{subsec:block_equiv} and highly optimized algorithms exist for the matrix implementation.

\begin{figure}[h!]
    \centering
    \includegraphics[width=.45\textwidth]{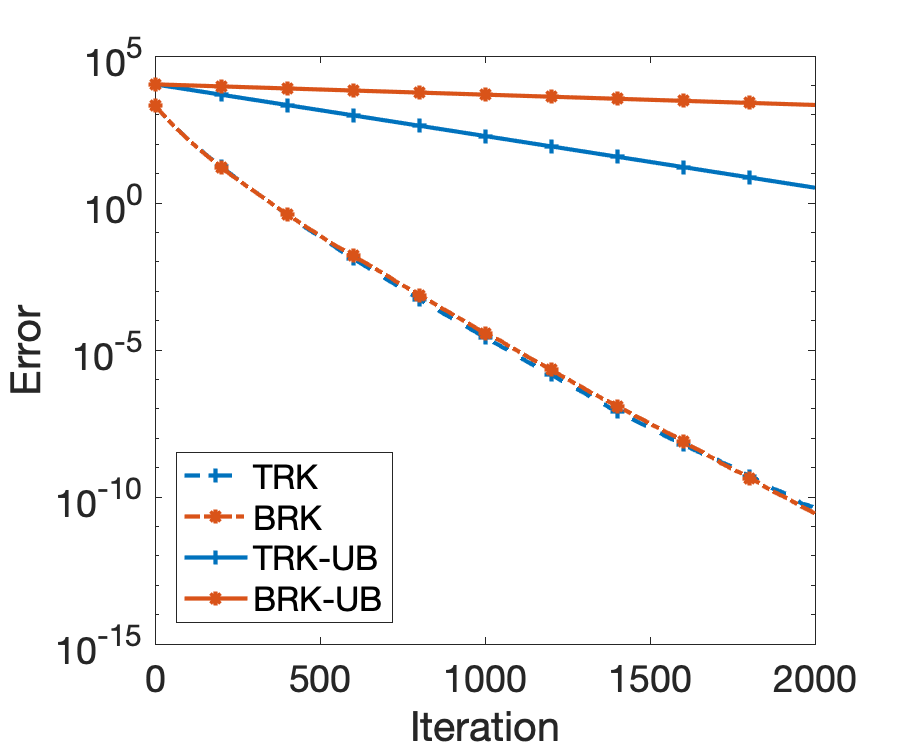}
    \caption{Performance of TRK and block MRK on a tensor linear system. `TRK-UB' and `BRK-UB' indicate the theoretical upper bounds of TRK and block MRK respectively.}
    \label{fig:exp_trk_brk}
\end{figure}

\section{Conclusion}
This work extends the randomized Kaczmarz literature to solve large-scale tensor linear systems under the t-product. The proposed tensor randomized Kaczmarz (TRK) algorithm solves large-scale tensor linear systems and is guaranteed to convergence exponentially in expectation. Connections to the block randomized Kaczmarz are made and empirical results are provided to support derived theoretical guarantees. This work further provides a framework to extend other stochastic iterative methods that arise in literature such as the randomized extended Kaczmarz algorithm, randomized Gauss-Seidel algorithm, coordinate descent, sketch-and-project \cite{gower2015randomized}, and many more.

\section*{Acknowledgements}
This work began at the 2019 workshop for Women in Science of Data and Math (WISDM) held at the Institute for Computational and Experimental Research in Mathematics (ICERM). This workshop is partially supported by an NSF ADVANCE grant (award \#1500481) to the Association for Women in Mathematics (AWM). Ma was partially supported by U.S. Air Force Award FA9550-18-1-0031 led by Roman Vershynin. Molitor is grateful to and was partially supported by NSF CAREER DMS $\#1348721$ and NSF BIGDATA DMS $\#1740325$ led by Deanna Needell. The authors would also like to thank Misha Kilmer for her advising during the WISDM workshop and valuable feedback that improved earlier versions of this manuscript. 

\appendix

\section{Proofs for properties of block circulant matrices}\label{sec:bcirc_proofs}
We use properties of circulant matrices and the Kronecker product in proving \Cref{fact:bcirc_distr,fact:bcirc_transpose}. 
For a vector $v\in\C^n$
\begin{equation*}
    \circulant{v} = 
    \begin{pmatrix}
    v_0 & v_{n-1} & \dots & v_1\\
    v_1 & v_{0} & \dots & v_2\\
    \vdots & \vdots & \ddots & \vdots\\
    v_{n-2} & v_{n-3} & \dots & v_{n-1}\\
    v_{n-1} & v_{n-2} & \dots & v_0\\
    \end{pmatrix}.
\end{equation*}

The block circulant of a matrix $\bcirc{\cM}$ can be decomposed as 
    \begin{equation}\label{eqn:circ_form}
        \bcirc{\cM} 
        = \sum_{i=0}^{n-1} \circulant{e_i} \otimes \cM_{::i},
    \end{equation}
 where $e_i$ is the $i\thup$ standard basis vector in $\C^n$ and $\otimes$ denotes the Kronecker product.

\subsection{Proof of \Cref{fact:bcirc_distr}}
Recall that \Cref{fact:bcirc_distr} states
    \begin{equation*}
        \bcirc{\cA\cB} = \bcirc{\cA}\bcirc{\cB}.
    \end{equation*}
\begin{proof}
Decomposing $\bcirc{\cA\cB}$, \Cref{eqn:circ_form} obtains the equality

\begin{equation*}
    \bcirc{\cA\cB} 
    = \sum_{i=0}^{n-1} \circulant{e_i} \otimes\left(\cA\cB\right)_{::i}.
\end{equation*}

For notational simplicity, let $\mA_i = \cA_{::i}$ and $\mB_i = \cB_{::i}$ denote the $i\thup$ frontal faces of $\cA$ and $\cB$ respectively. Then
    \begin{align*}
        \left(\cA\cB\right)_{::i} 
        &= \fold{\bcirc{\cA} \unfold{\cB}}_{::i}\\
        &= \begin{pmatrix}
        \mA_i & \mA_{i-1 } & \dots \mA_0 & \mA_{n-1} & \dots \mA_{i+1}
        \end{pmatrix}\unfold{\cB}\\
        &=
        \mA_i \mB_{0} +  \mA_{i-1 }\mB_{1} +  \dots \mA_0\mB_{i} + \mA_{n-1}\mB_{i+1} +  \dots \mA_{i+1}
       \mB_{n-1} \\
       &=\sum_{k=0}^{n-1} \mA_{i - k\pmod n} \mB_{k}.
    \end{align*}

    We then have that 
    \begin{equation*}
        \bcirc{\cA\cB} 
        = \sum_{i=0}^{n-1} \sum_{k=0}^{n-1}  \circulant{e_i} \otimes \mA_{i - k \pmod n} \mB_{k} .
    \end{equation*}
    Changing $i\to i+k$, we can rewrite this as 
    \begin{equation}\label{eqn:bcirc_AB_expanded}
        \bcirc{\cA\cB} 
        = \sum_{i=0}^{n-1} \sum_{k=0}^{n-1} \circulant{e_{i+k\pmod n}}  \otimes \mA_{i} \mB_{k} .
    \end{equation}
 
    Similarly, we can decompose $\bcirc{\cA}\bcirc{\cB}$ as 
    \begin{align*}
        \bcirc{\cA}\bcirc{\cB}
        &= \left(\sum_{i=0}^{n-1} \circulant{e_i} \otimes \mA_i  \right)
        \left(\sum_{k=0}^{n-1} \circulant{e_k} \otimes \mB_k  \right)\\
        &= \sum_{i=0}^{n-1} \sum_{k=0}^{n-1} \left(\circulant{e_i} \otimes \mA_i  \right)
        \left( \circulant{e_k} \otimes \mB_k \right).
    \end{align*}
    The mixed-product property further gives
    \begin{align*}
        \bcirc{\cA}\bcirc{\cB}
        &= \sum_{i=0}^{n-1} \sum_{k=0}^{n-1}  \circulant{e_i} \circulant{e_k} \otimes \mA_i\mB_k  \\
        &= \sum_{i=0}^{n-1} \sum_{k=0}^{n-1} \circulant{e_{i+k \pmod n}} \otimes \mA_i\mB_k  .
    \end{align*}
    We have now recovered the right-hand side of \Cref{eqn:bcirc_AB_expanded} and thus $\bcirc{\cA\cB}=
    \bcirc{\cA}\bcirc{\cB}$ as desired.
\end{proof}

\subsection{Proof of \Cref{fact:bcirc_transpose}}
\Cref{fact:bcirc_transpose} states
    \begin{equation*}
        \bcirc{\cM^*} = \bcirc{\cM}^*.
    \end{equation*}
\begin{proof}
For simplicity, let $\mM_i = \cM_{::i}$ denote the $i\thup$ frontal face of $\cM$. Decomposing $\bcirc{\cM}$ as in \Cref{eqn:circ_form}, using the definition of the tensor transpose, the fact that $\left(\mA \otimes \mB\right)^* = \mA^* \otimes \mB^*$ and $\circulant{e_i}^* = \circulant{e_{n-i}}$ \cite{kra2012circulant},
    \begin{align*}
        \bcirc{\cM^*} 
        &\overset{\eqref{eqn:circ_form}}{=} \sum_{i=0}^{n-1} \circulant{e_i} \otimes \left(\cM^*\right)_{::i} \\
        &= \mI_n \otimes \left(\mM_{1}\right)^* + \sum_{i=1}^{n-1}  \circulant{e_i} \otimes \left(\mM_{n-i}\right)^* \\
        &=  \mI_n^* \otimes \left(\mM_{1}  \right)^*+ \sum_{i=1}^{n-1}\circulant{e_{n-1}}^* \otimes \left(\mM_{n-i}  \right)^*\\
        &= \left[\mI_n  \otimes \left(\mM_{1} \right)+ \sum_{i=1}^{n-1} \circulant{e_{n-i}} \otimes \left(\mM_{n-i}  \right)\right]^*\\
        &= \bcirc{\cM}^*.
    \end{align*}
\end{proof}

\section{Tensor Pythagorean Theorem}
An analogue of the Pythagorean Theorem is stated and proved for tensors.
\begin{lemma}\label{lem:pythag}
    For an orthogonal projection $\cP$ and tensor $\cM$ of compatible size,
    \begin{equation*}
        \norm{\cM}_F^2 = \norm{\left(\cI - \cP\right)\cM}_F^2 +\norm{ \cP\cM}_F^2.
    \end{equation*}
\end{lemma}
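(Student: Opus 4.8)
The plan is to transport the identity to block-circulant matrices, where it reduces to the familiar vector Pythagorean theorem applied column by column. First I would recall that $\cP\cM = \fold{\bcirc{\cP}\unfold{\cM}}$ by \Cref{defn:tprod}, and that folding and unfolding merely rearrange entries, so $\norm{\cM}_F^2 = \norm{\unfold{\cM}}_F^2$ and $\norm{\cP\cM}_F^2 = \norm{\bcirc{\cP}\unfold{\cM}}_F^2$. Since $\bcirc{\cdot}$ is linear and $\bcirc{\cI} = \mI$ (the full $mn\times mn$ identity), we also have $(\cI-\cP)\cM = \fold{(\mI - \bcirc{\cP})\unfold{\cM}}$, hence $\norm{(\cI-\cP)\cM}_F^2 = \norm{(\mI-\bcirc{\cP})\unfold{\cM}}_F^2$. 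Thus it suffices to prove, for the matrix $\mM := \unfold{\cM}$,
\begin{equation*}
    \norm{\mM}_F^2 = \norm{(\mI - \bcirc{\cP})\mM}_F^2 + \norm{\bcirc{\cP}\mM}_F^2.
\end{equation*}

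Next I would invoke \Cref{lem:bcirc_proj}, which gives that $\bcirc{\cP}$ is an orthogonal projection matrix, i.e.\ $\bcirc{\cP}^* = \bcirc{\cP}$ and $\bcirc{\cP}^2 = \bcirc{\cP}$; consequently $\mI - \bcirc{\cP}$ is also a (complementary) orthogonal projection. Writing $\norm{\mM}_F^2 = \sum_j \norm{\mM_{:j}}_2^2$ as a sum over columns, the problem decouples into the classical vector statement: for an orthogonal projection matrix $\mP$ and vector $v$, $\norm{v}_2^2 = \norm{(\mI-\mP)v}_2^2 + \norm{\mP v}_2^2$. This follows by expanding $\norm{v}_2^2 = \norm{(\mI-\mP)v + \mP v}_2^2$ and checking that the cross term vanishes, since $\langle (\mI-\mP)v, \mP v\rangle = v^*(\mI - \mP)^*\mP v = v^*(\mP - \mP^2)v = 0$ using $\mP^* = \mP$ and $\mP^2 = \mP$. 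Summing over $j$ and folding back recovers the tensor identity.

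I do not expect a genuine obstacle here: the only points requiring care are (i) that $\fold{\cdot}$ and $\unfold{\cdot}$ are isometries for the Frobenius norm, which is immediate from \Cref{eqn:unfold}, and (ii) that $\bcirc{\cdot}$ respects the subtraction $\cI - \cP$, which follows from the block structure in \Cref{defn:bcirc} together with $\bcirc{\cI} = \mI$. Everything else is the standard orthogonality argument, so the bulk of the write-up is bookkeeping rather than any substantive difficulty.
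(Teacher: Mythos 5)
Your proposal is correct and follows essentially the same route as the paper's proof: unfold to reduce to $\bcirc{\cP}\unfold{\cM}$, invoke \Cref{lem:bcirc_proj} to get an orthogonal projection matrix, and apply the classical Pythagorean theorem. The only difference is that you spell out the vanishing cross term explicitly, which the paper leaves implicit.
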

\begin{proof}
This result can be shown by rewriting the tensor products in terms of matrix products and applying \Cref{lem:bcirc_proj}. Note that for a tensor $\cM$,
\begin{equation*}
    \norm{\cM}_F^2 = \norm{\unfold{\cM}}_F^2 .
\end{equation*}
Decomposing $\norm{\cM}_F^2$ and rewriting the result in terms of matrices,
\begin{align*}
    \norm{\cM}_F^2 
    &= \norm{\left(\cI - \cP\right)\cM + \cP\cM}_F^2 \\
    &= \norm{\bcirc{ \cI - \cP}\unfold{ \cM} +  \bcirc{\cP}\unfold{ \cM}}_F^2.
\end{align*}
Since $\bcirc{\cP}$ is an orthogonal projection (\Cref{lem:bcirc_proj}), by the Pythagorean theorem,
\begin{align*}
     \norm{\cM}_F^2 
    &= \norm{\bcirc{ \cI - \cP}\unfold{ \cM}}_F^2 + \norm{ \bcirc{\cP}\unfold{ \cM}}_F^2\\
    &\quad = \norm{\left(\cI - \cP\right)\cM}_F^2 +\norm{ \cP\cM}_F^2.
\end{align*}    
\end{proof}

\bibliographystyle{abbrv}
\bibliography{mybib}

\end{document}